\def\BibTeX{{\rm B\kern-.05em{\sc i\kern-.025em b}\kern-.08em
		T\kern-.1667em\lower.7ex\hbox{E}\kern-.125emX}}
\newtcolorbox{whiteFrameResultBeforeAfter}{
	colframe=black!20!white,
	top=2mm, bottom=3 mm, left=0mm, right=0mm,
	arc=0mm,
	%
}
\DeclareMathOperator{\ran}{Ran}
\DeclareMathOperator*{\argmin}{argmin}
\newtheorem{theorem}{Theorem}[section]
\newtheorem{lemma}[theorem]{Lemma}
\newtheorem{corollary}[theorem]{Corollary}
\theoremstyle{definition}
\newtheorem{assumption}[theorem]{Assumption}
\theoremstyle{remark}
\newtheorem{remark}[theorem]{Remark}
\numberwithin{equation}{section}
\begin{document}

\title{Minimal $\ell^2$ Norm Discrete Multiplier Method}


\author[Erick Schulz]{Erick Schulz$^1$}
\address{$^1$Seminar in Applied Mathematics, Swiss Federal Institute of Technology Zurich}
\curraddr{Rämistrasse 101, CH-8092 Zurich, Switzerland}
\email{erick.schulz@sam.math.ethz.ch}
\thanks{$^1$Seminar in Applied Mathematics, ETH Z\"{u}rich, Switzerland (erick.schulz@sam.math.ethz.ch)}

\author[Andy T. S. Wan]{Andy T. S. Wan$^2$}
\address{$^2$Department of Mathematics and Statistics, University of Northern British Columbia}
\curraddr{3333 University Way, Prince George, BC V2N 4Z9, Canada}
\email{andy.wan@unbc.ca}
\thanks{$^2$Department of Mathematics \& Statistics, University of Northern British Columbia, Canada (andy.wan@unbc.ca)}



\begin{abstract}
We introduce an extension to the Discrete Multiplier Method (DMM) \cite{wan2017conservative}, called Minimal $\ell_2$ Norm Discrete Multiplier Method (MN-DMM), where conservative finite difference schemes for dynamical systems with multiple conserved quantities are constructed procedurally, instead of analytically as in the original DMM. For large dynamical systems with multiple conserved quantities, MN-DMM alleviates difficulties that can arise with the original DMM at constructing conservative schemes which satisfies the discrete multiplier conditions. In particular, MN-DMM utilizes the right Moore-Penrose pseudoinverse of the discrete multiplier matrix to solve an underdetermined least-square problem associated with the discrete multiplier conditions. We prove consistency and conservative properties of the MN-DMM schemes. We also introduce two variants -- Mixed MN-DMM and MN-DMM using Singular Value Decomposition -- and discuss their usage in practice. Moreover, numerical examples on various problems arising from the mathematical sciences are shown to demonstrate the wide applicability of MN-DMM and its relative ease of implementation compared to the original DMM.
\end{abstract}


\maketitle

\section{Introduction} In recent decades, numerical methods which preserve intrinsic geometric structures of dynamical systems have gained considerable interest. Geometric numerical integrators are numerical methods which preserve underlying geometric features of solutions between successive time steps. An extensive summary of relevant literature is presented in \cite{hair06Ay}. In addition to striving for the traditional goals of high order accuracy, stability and ease of implementation, geometric numerical integrators seek to respect inherent geometric structures of dynamical systems to provide more accurate and stable solutions over \emph{long-term} integration. Examples of geometric numerical integrators include symplectic integrators which preserve the symplectic two-forms associated with Hamiltonian flows \cite{hair06Ay}, variational integrators which mimic the action principles of Lagrangian systems at the discrete level \cite{MarWes01} and Lie group integrators which compose discrete Lie group actions to approximate underlying continuous Lie group flows \cite{IMNZ00}. 

Another important class of geometric numerical integrators is conservative integrators, which preserve conserved quantities or invariants associated with the underlying dynamics. In general, similar to symplectic methods \cite{CalHai95}, conservative numerical methods can have favorable long-term stability properties \cite{wan2018stability} over traditional numerical methods. Typically, such quantities include energy and momentum, but nontrivial time-dependent conserved quantities can also exist in dissipative systems \cite{wan2017conservative}. Unfortunately, traditional numerical methods do not in general preserve all forms of conserved quantities. For instance, the barrier theorem by \cite{KangZaijiu95} states that no consistent Runge-Kutta method can preserve all polynomial invariants. Thus, non-traditional numerical methods are needed to preserve general forms of conserved quantities. Within the literature of geometric numerical integration, there are a few general classes of conservative integrators, such as projection methods \cite{hair06Ay}, discrete gradient methods \cite{McL99} and more recently Discrete Multiplier Methods (DMM) \cite{wan2017conservative}, which we briefly review next. 

With projection methods, it is customary to first employ a traditional explicit integrator to advance one step in time, then to subsequently project the resulting numerical approximation onto the level set of the conserved quantities by solving a constrained optimization problem \cite{hair06Ay}. As discussed in \cite{wan2018stability}, while projection methods are general conservative integrators, the projection step can become problematic if the level set of the conserved quantities contain connected components which are nearby each other. Indeed, if the time step size is not sufficiently small to account for small distances between neighboring connected components, the projection step may bring the numerical solution to the wrong connected component, leading to incorrect long-term trajectories.

The discrete gradient method exploits the fact that dynamical systems with conserved quantities can be expressed in a skew-symmetric gradient form \cite{McL99}. This can then be used to derive conservative schemes using discrete gradient approximations. While the discrete gradient method is best suited for dynamical systems which naturally comes in such a skew-symmetric gradient representation, such as Hamiltonian systems, transforming a general dynamical system and utilizing the resulting skew-symmetric gradient form is not always straightforward in practice. Specifically, one drawback of the discrete gradient method is that the rank of the skew-symmetric tensor increases with the number of conserved quantities, making its applicability impractical for large dynamical systems with multiple invariants.

 The Discrete Multiplier Method (DMM) was introduced in \cite{wan2017conservative} as a new class of general conservative integrators that can preserve multiple conserved quantities of arbitrary forms up to machine precision. The main idea behind DMM is to discretize the so-called conservation law multiplier associated with the conserved quantities in such a way that discrete chain rules and other compatibility conditions are satisfied. In contrast to the discrete gradient method, DMM can work directly with the desired dynamical system,  without having to reformulate the differential equations. Moreover, DMM requires only working with the so-called discrete multiplier matrix, whose number of rows increases with the number of conserved quantities while retaining a \emph{constant tensor rank of two}. Such conservative integrators have recently been applied to a wide range of problems from the mathematical sciences, including many-body systems \cite{manybody2022}, vortex-blob models \cite{vortex2022}, and piecewise smooth systems \cite{HWW21}. In addition, for some applications such as Hamiltonian Monte Carlo \cite{chmc22}, the gradient-free nature of DMM is advantageous over other conservative methods which require computation of the gradients of the conserved quantities. 
 
 Despite the wide applicability of DMM, there remains practical challenges when applying DMM on large dynamical systems with multiple conserved quantities. Specifically for each dynamical system, DMM proceeds in two main stages: First, derive an analytic conservative scheme using DMM; Second, solve the associated implicit conservative scheme. In this work, we extend the DMM framework by implicitly defining conservative schemes via a Moore-Penrose pseudoinverse of the associated discrete multiplier matrix. In doing so, DMM conservative schemes are constructed \emph{procedurally} and solved \emph{simultaneously}, without the need to first derive analytic conservative schemes. This extension of DMM widens its applicability to more complex dynamical systems and semi-discretizations of partial differential equations with conserved quantities. 
 
 This paper is organized as follows. In \Cref{sec:background}, we give a brief overview of the background material of DMM and introduce the relevant notations used throughout the paper. We then introduce the \emph{Minimal $\ell_2$ Norm Discrete Multiplier Method} (MN-DMM) in \Cref{sec: Minimal Norm DMM}. Consistency and conservative properties of the implicitly defined schemes are established. In \Cref{sec: Practical Implementation}, we discuss practical issues that can arise in solving the MN-DMM schemes using the \emph{Direct MN-DMM algorithm} via fixed point iterations. We prove convergence under appropriate conditions. Furthermore, we introduce in \Cref{sec: Practical Implementation} two variants of the Direct MN-DMM algorithm, called \emph{Mixed MN-DMM algorithm} and \emph{Mixed MN-DMM algorithm using Singular Value Decomposition}. These two variants alleviate potential drawbacks with the Direct MN-DMM algorithm. In \Cref{sec: Numerical results}, numerical comparisons between the various MN-DMM approaches and traditional methods are presented for five examples chosen from a wide range of applications in the mathematical sciences. These includes Lotka--Volterra systems, the planar restricted three-body problem, the Lorenz system, the spherical point vortex problem and the evolution of geodesic curves in Schwarzschild geometry. 
\vskip -4mm
\section{Background material}\label{sec:background}

We retain most of the notations of the Discrete Multiplier Method from \cite{wan2017conservative}. For details on the theoretical developments of DMM, see \cite{wan2017conservative} and \cite{wan2018stability}. In this section, we summarize the content of these articles by stating some basic definitions along with a few necessary results.


\subsection{Notation}
Throughout this paper, the integers $m,n,p,r\in\mathbb{N}$ are strictly positive. We denote open subsets of $\mathbb{R}^n$ by $U$, $U^{(1)}$, $U^{(2)}$, etc. We write $U^{r}$ for the Cartesian product of $r$ copies of $U$.

By $f\in C^p(U\rightarrow \mathbb{R}^m)$, we mean that the function $f$ from $U$ to $\mathbb{R}^m$ is at least $p$ times continuously differentiable. We use a \textbf{bold} font to distinguish vector quantities from scalars. The Jacobian matrix of a differentiable vector-valued function $\bm{f}$ is denoted by $\smash{\partial_{\bm{x}}\bm{f}:=\left[\partial f_i/\partial x_j\right]}$. 

 Let $I\subset\mathbb{R}$ be an open time interval. We adopt Newton's notation $\dot{\bm{x}}$ for the time derivative of a curve $\bm{x}\in C^1(I\rightarrow U)$. If $\bm{x}\in C^p(I\rightarrow U)$, then $\bm{x}^{(p)}$ stands for its $p$-th time derivative. We use $D_t\bm{\psi}$ to distinguish the total time derivative  of a vector-valued function $\bm{\psi}\in C^1(I\times U\rightarrow\mathbb{R}^m)$ from its partial time derivative $\partial_t\bm{\psi}$.

The vector space of $m\times n$ real matrices is written as $M_{m\times n}(\mathbb{R})$. It is equipped with the operator norm, which we denote by $\norm{\cdot}_{m\times n}$. A superscript `$\top$' indicates the transpose of a matrix quantity, e.g. $\Lambda^{\top}$. We indicate the dependence of strictly positive constants in parentheses, e.g. $C\left(\Lambda\right)$. These constants are generic and should generally not be considered equal between different results.

\subsection{Review on conservation law multipliers}\label{sec: Review on conservation law multipliers}
Next, we briefly review the theory of conservation law multipliers for first-order quasi-linear systems of ordinary differential equations---recall all quasi-linear systems can be made first-order by adding more variables. More precisely, for $p=1,2,\dots$ and a source function $\bm{f}\in C^{p-1}(I\times U\rightarrow\mathbb{R}^n)$ with Lipschitz continuity in $U$, consider the continuous dynamical system $\bm{F}:I\times U\times U^{(1)}\rightarrow \mathbb{R}^n$ given by the initial value problem
\begin{align}\label{eq: quasilinear ODE}
\arraycolsep=1.4pt
\begin{array}{rl}
		    \bm{F}(t,\bm{x}(t),\dot{\bm{x}}(t)):= \dot{\bm{x}}(t)-\bm{f}(t,\bm{x}(t))  &=\bm{0},\\
		\bm{x}(t^0)&=\bm{x}^0.
\end{array}
\end{align}
It is a classical result that there exists a unique solution $\bm{x}(t)=(x_1(t),...,x_n(t))$ of class $C^p$ in a neighborhood of any initial condition $(t^0,\bm{x}^0)\in I\times U$. For simplicity, we will always assume from now on that $I$ is a maximal interval of existence.

A function $\bm{\psi}\in C^1(I\times U\rightarrow\mathbb{R}^m)$ is called a conserved quantity of $\bm{F}$ if
\begin{equation}\label{eq: def conserved quantity}
D_t\bm{\psi}(t,\bm{x}(t))=\bm{0}
\end{equation}
for all $\bm{x}\in C^{p}(I\rightarrow U)$ such that $\bm{F}(t,\bm{x}(t),\dot{\bm{x}}(t))= \bm 0$. In other words, a conserved quantity
remains constant along \emph{solutions} of \eqref{eq: quasilinear ODE}. In principle, a conserved quantity can depend on higher-order time derivatives of $\bm{x}$ also, but these can always be reformulated as $\bm{\psi}\left(t,\bm{x}\right)$ by substituting the relation $\dot{\bm{x}}=\bm{f}(t,\bm{x}(t))$ and its differential consequences, as shown in \cite[Sec. 3.1]{wan2017conservative}. Thus, without loss of generality, we can focus on conserved quantities of such form.

We say that a matrix-valued function 
$
\Lambda\in C(I\times U \times U^{(1)}\rightarrow M_{m\times n}(\mathbb{R}))
$
is a conservation law multiplier of $\bm{F}$ if there exists $\bm{\psi}\in C^1(I\times U \rightarrow \mathbb{R}^m)$ satisfying
\begin{equation}\label{eq: def conservation law multiplier}
\Lambda (t,\bm{x}(t),\dot{\bm{x}}(t)) \bm{F}(t,\bm{x}(t),\dot{\bm{x}}(t))=D_t\bm{\psi}(t,\bm{x}(t))
\end{equation}
for all $\bm{x}\in C^1(I\rightarrow U)$. We insist that \eqref{eq: def conservation law multiplier} must hold for all \emph{arbitrary} differentiable functions---not only for solutions of $\bm{F}$ as previously required for \eqref{eq: def conserved quantity}. 

In general, there can be many different conservation law multipliers satisfying \eqref{eq: def conservation law multiplier} for the same $\bm{\psi}$. However, the following theorem guarantees that there is a one-to-one correspondence between conservation law multipliers of the form $\Lambda(t,\bm{x})$ and zero-order conserved quantities of $\bm{F}$, cf. \cite[Thm. 4]{wan2017conservative}. 

\begin{theorem}\label{thm: correspondence}
	Let $\bm{\psi}\in C^1(I\times U \rightarrow \mathbb{R}^m)$. There exists a unique conservation law multiplier $\Lambda\in C(I\times U \rightarrow M_{m\times n}(\mathbb{R}))$ of $\bm{F}$ associated with the function $\bm{\psi}$ if and only if $\bm{\psi}$ is a conserved quantity of $\bm{F}$. If so, the correspondence identities
	\begin{subequations}
		\begin{align}
			\Lambda(t,\bm{x}) &= \partial_{\bm{x}}\bm{\psi}(t,\bm{x}),\label{eq: continuous correspondence identity a}\\
			\Lambda(t,\bm{x})\bm{f}(t,\bm{x}) & =-\partial_t\bm{\psi}(t,\bm{x}),\label{eq: continuous correspondence identity b}
		\end{align}
	\end{subequations}
	are satisfied for any arbitrary function $\bm{x}\in C^1(I\rightarrow U)$.
\end{theorem}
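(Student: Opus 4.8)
The plan is to establish both implications simultaneously, together with the two correspondence identities, by exploiting one elementary observation: at any fixed time $t_0$ the value $\dot{\bm x}(t_0)$ of a curve $\bm x\in C^1(I\rightarrow U)$ may be prescribed arbitrarily and independently of $\bm x(t_0)$ — for instance, by taking locally the affine curve $t\mapsto \bm x(t_0)+(t-t_0)\bm v$ and truncating it so that it remains in the open set $U$. Expanding $\bm F$ and $D_t\bm\psi$, the defining relation \eqref{eq: def conservation law multiplier} for a multiplier of the form $\Lambda(t,\bm x)$ reads
\[
\Lambda(t,\bm x)\bigl(\dot{\bm x}-\bm f(t,\bm x)\bigr)=\partial_t\bm\psi(t,\bm x)+\partial_{\bm x}\bm\psi(t,\bm x)\,\dot{\bm x}
\]
and must hold for every $C^1$ curve, hence for every admissible pair $(\bm x,\dot{\bm x})$.

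For the ``only if'' direction, suppose such a $\Lambda$ exists. Along any solution of \eqref{eq: quasilinear ODE} one has $\bm F=\bm 0$, so \eqref{eq: def conservation law multiplier} immediately gives $D_t\bm\psi=\bm 0$ along solutions, which is \eqref{eq: def conserved quantity}; thus $\bm\psi$ is a conserved quantity. Next, fixing $(t,\bm x)$ in the displayed identity and evaluating at two velocity values $\dot{\bm x}=\bm v$ and $\dot{\bm x}=\bm v'$, subtraction yields $\Lambda(t,\bm x)(\bm v-\bm v')=\partial_{\bm x}\bm\psi(t,\bm x)(\bm v-\bm v')$ for all $\bm v,\bm v'\in\mathbb R^n$, hence \eqref{eq: continuous correspondence identity a}; substituting then $\dot{\bm x}=\bm 0$ gives \eqref{eq: continuous correspondence identity b}. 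Applying the same cancellation argument to the difference $\Lambda_1-\Lambda_2$ of two candidate multipliers forces $\Lambda_1=\Lambda_2$, proving uniqueness.

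For the ``if'' direction, suppose $\bm\psi$ is a conserved quantity and define $\Lambda(t,\bm x):=\partial_{\bm x}\bm\psi(t,\bm x)$, which lies in $C(I\times U\rightarrow M_{m\times n}(\mathbb R))$ since $\bm\psi\in C^1$. Because the initial value problem \eqref{eq: quasilinear ODE} has a solution through every point of $I\times U$ (the local existence result quoted after \eqref{eq: quasilinear ODE}), the relation $D_t\bm\psi=\bm 0$ holding along solutions upgrades to the pointwise identity $\partial_t\bm\psi(t,\bm x)+\partial_{\bm x}\bm\psi(t,\bm x)\bm f(t,\bm x)=\bm 0$ on all of $I\times U$, which is \eqref{eq: continuous correspondence identity b}. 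Then for an arbitrary $\bm x\in C^1(I\rightarrow U)$,
\[
\Lambda(t,\bm x)\,\bm F(t,\bm x,\dot{\bm x})=\partial_{\bm x}\bm\psi(t,\bm x)\,\dot{\bm x}-\partial_{\bm x}\bm\psi(t,\bm x)\,\bm f(t,\bm x)=\partial_{\bm x}\bm\psi(t,\bm x)\,\dot{\bm x}+\partial_t\bm\psi(t,\bm x)=D_t\bm\psi(t,\bm x(t)),
\]
so \eqref{eq: def conservation law multiplier} holds and $\Lambda$ is the sought multiplier, satisfying \eqref{eq: continuous correspondence identity a} by construction.

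The only slightly delicate points are the ``arbitrary velocity'' device and the passage from an identity valid along solutions to one valid pointwise on $I\times U$; both rest solely on $U$ being open and on the quoted local existence and uniqueness theory for \eqref{eq: quasilinear ODE}. Consequently I anticipate no substantive obstacle — once that observation is isolated, the remainder is routine verification and bookkeeping.
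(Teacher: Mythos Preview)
The paper does not actually supply a proof of this theorem; it is stated as background material and attributed to \cite[Thm.~4]{wan2017conservative}. Your argument is correct and is precisely the standard route one expects for this result: expand the multiplier identity using the chain rule for $D_t\bm\psi$, exploit that $(\bm x(t_0),\dot{\bm x}(t_0))$ can be prescribed independently via affine curves to match coefficients of $\dot{\bm x}$ (yielding \eqref{eq: continuous correspondence identity a} and uniqueness), read off \eqref{eq: continuous correspondence identity b} from the remaining terms, and for the converse use local existence through every $(t,\bm x)\in I\times U$ to upgrade the on-solution identity $D_t\bm\psi=\bm 0$ to the pointwise relation $\partial_t\bm\psi+\partial_{\bm x}\bm\psi\,\bm f=\bm 0$. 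There is nothing to compare against in the present paper, and nothing missing from your proposal.
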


We will commonly refer to \eqref{eq: continuous correspondence identity a} and \eqref{eq: continuous correspondence identity b} as \emph{multiplier conditions}. Importantly, \eqref{eq: continuous correspondence identity a} explicitly characterizes the conservation law multiplier.

For the purpose of deriving conservative schemes, there is some freedom in choosing the dimension of $\bm{\psi}$. For a given dynamical system, the components of the vector-valued function $\bm{\psi}$ consist of known conserved quantities of interest. How many are to be preserved using DMM is up to the one's discretion. In practice, there are typically much fewer conserved quantities than the dimension of $\bm{F}$. We thus take for granted the following assumption.

\begin{assumption}\label{assum: nb of conserved quantities}
We suppose that $m<n$ and assume that $\Lambda(t,\bm{x})$ has full row rank within $I\times U$.
\end{assumption}

\begin{remark}
Notice that $\Lambda(t,\bm{x})$ having full row rank in \Cref{assum: nb of conserved quantities} is equivalent to the conserved quantities being linearly independent on $I\times U$.
\end{remark}

\subsection{Review of DMM}\label{sec: Review of DMM}
The idea behind DMM is to provide a discrete framework which preserves the structure of the continuous theory of conservation law multipliers from \Cref{sec: Review on conservation law multipliers}. Specifically, it establishes discrete analogues of the multiplier conditions \eqref{eq: continuous correspondence identity a} and \eqref{eq: continuous correspondence identity b}.

Let $t^0< t^1 < ... < t^k < ...$ be a sequence in $I$ having a largest time step of size $\tau=\sup_{k}(t^{k+1}-t^k)<\infty$. 
Let $W$ be a finite dimensional normed vector space. A $r$-step function 
$
\bm{g}^{\tau}\in C^{p+q}(I\times U^{r+1}\rightarrow W)
$
is said to be consistent of order $q$ to a function $\bm{g}\in C^{p+q}(I\times U\times U^{(1)}\times...\times U^{(q)}\rightarrow W)$ if for any $\bm{x}\in C^{p+q}\left(I\rightarrow U\right)$, there exists a constant $C(\bm{g},\bm{x})>0$ independent of $\tau$ such that
\begin{equation}\label{eq: consistency estimate def}
\norm{\bm{g}(t^k,\bm{x}(t^k),...,\bm{x}^{(p)}(t^k))-\bm{g}^{\tau}(t^k,\bm{x}(t^{k+1}),...,\bm{x}(t^{k-r+1}))}_W\leq C(\bm{g},\bm{x})\,\tau^q.
\end{equation}
If so, we simply write $\bm{g}^{\tau}=\bm{g}+\mathcal{O}(\tau^q)$. This definition is general enough to provide a notion of consistency for both vector-valued and matrix-valued quantities. In the following sections, we will encounter $W=\mathbb{R}^m$ and $M_{m\times n}(\mathbb{R})$.

Denote the approximation at time $t_k$ of the exact solution $\bm x(t^k)$ by $\bm{x}^k$. Let $\bm{F}^{\tau}$ be a consistent $r$-step function to $\bm{F}$ and suppose that $\bm{\psi}^{\tau}$ is a consistent $\left(r-1\right)$-step function to $\bm{\psi}$. We say that the $r$-step method $\bm{F}^{\tau}$ is conservative in $\bm{\psi}^{\tau}$ if 
\begin{equation}\label{eq: discrete conservation}
\bm{\psi}^\tau(t^k,\bm{x}^{k},...,\bm{x}^{k-r+1})=\bm{\psi}^\tau(t^{k+1},\bm{x}^{k+1},...,\bm{x}^{k-r+2})
\end{equation}
whenever $\bm{x}^{k+1}$ satisfies $\bm{F}^{\tau}\left(t^k,\bm{x}^{k+1},...,\bm{x}^{k-r+1}\right)=\bm{0}$.

When $D_t^{\tau}\bm{\psi}$ is an $r$-step function consistent to $D_t\bm{\psi}$, we say that it is constant compatible with $\bm{\psi}^{\tau}$ if $D^{\tau}_t\bm{\psi}\left(t^k,...,\bm{x}^{k+1},...,\bm{x}^{k-r+1}\right)=\bm{0}$ implies that \eqref{eq: discrete conservation} holds.

\begin{assumption}\label{assum: consistency}
Henceforth, we will always suppose that $\bm{f}^{\tau}$, $D^{\tau}_t\bm{x}$, $D^{\tau}_t\bm{\psi}$, $\partial^{\tau}_t\bm{\psi}$ and $\Lambda^{\tau}$ are $r$-step functions consistent of order $q$ respectively to $\bm{f}$, $\dot{\bm{x}}$, $D_t\bm{\psi}$, $\partial_t\bm{\psi}$ and $\Lambda$, where $\Lambda$ is a conservation law multiplier of $\bm{F}$ associated with the conserved quantity $\bm{\psi}$. We assume that $D^{\tau}_{t}\bm{\psi}$ is constant compatible with a discrete ($r-1$)-step function $\bm{\psi}^\tau$.
\end{assumption}


The following theorem is the heart of DMM, cf. \cite[Thm. 4.5]{wan2017conservative}.

\begin{theorem}\label{thm: conservative scheme theory}
	 Let $\bm{f}^{\tau}_{\text{\emph{\scalebox{.7}{DMM}}}}$ be a $r$-step function consistent of order $q$ to $\bm{f}$. Under assumptions \ref{assum: nb of conserved quantities} and \ref{assum: consistency}, if the discrete compatibility conditions
	\begin{subequations}
		\begin{align}
		\Lambda^{\tau}D^{\tau}_t\bm{x}&= D^{\tau}_t\bm{\psi}-\partial^{\tau}_t\bm{\psi}, \label{eq: discrete correspondence a}\\
		\Lambda^{\tau}\bm{f}^{\tau}_{\text{\emph{\scalebox{.7}{DMM}}}}&= - \partial_t^{\tau}\bm{\psi},\label{eq: discrete correspondence b}
		\end{align}
	\end{subequations}
hold for all $(t^k,\bm{x}^{k+1},...,\bm{x}^{k-r+1})\in I\times U^{r+1}$ satisfying
\begin{equation*}\label{F r-step method}
    \bm{F}_{\text{\emph{\scalebox{.7}{DMM}}}}^{\tau}(t^k,\bm{x}^{k+1},...,\bm{x}^{k-r+1}) = \bm{0},
\end{equation*}
where $$\bm{F}_{\text{\emph{\scalebox{.7}{DMM}}}}^{\tau}:=D^{\tau}_t\bm{x}-\bm{f}^{\tau}_{\text{\emph{\scalebox{.7}{DMM}}}},$$ then the $r$-step method defined by \eqref{F r-step method} is conservative in $\bm{\psi}^{\tau}$. Moreover, it is consistent of at least order $q$ to $\bm{F}$, and for any sufficiently differentiable arbitrary function $\bm{x}$, the discrete quantities satisfy
	\begin{subequations}
		\begin{align*}
		\Lambda^{\tau}D^{\tau}_t\bm{x} - D^{\tau}_t\bm{\psi} - \partial^{\tau}_t\bm{\psi} &= \mathcal{O}(\tau^q), 
		\\
		\Lambda^{\tau}\bm{f}^{\tau}_{\text{\emph{\scalebox{.7}{DMM}}}}+ \partial^{\tau}_t \bm{\psi} &= \mathcal{O}(\tau^q). 
		\end{align*}
	\end{subequations}
\end{theorem}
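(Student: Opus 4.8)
The plan is to establish the three assertions of the theorem — conservation, consistency, and the asymptotic compatibility estimates — in turn, since each rests on a different subset of the hypotheses. The conservation statement is the core of the argument and reduces to a short algebraic manipulation. Fix any tuple $(t^k,\bm{x}^{k+1},\dots,\bm{x}^{k-r+1})\in I\times U^{r+1}$ at which $\bm{F}_{\mathrm{DMM}}^{\tau}=\bm{0}$, so that $D_t^{\tau}\bm{x}=\bm{f}_{\mathrm{DMM}}^{\tau}$ there. Rearranging \eqref{eq: discrete correspondence a}, substituting this identity, and then invoking \eqref{eq: discrete correspondence b} gives
\begin{equation*}
D_t^{\tau}\bm{\psi}=\Lambda^{\tau}D_t^{\tau}\bm{x}+\partial_t^{\tau}\bm{\psi}=\Lambda^{\tau}\bm{f}_{\mathrm{DMM}}^{\tau}+\partial_t^{\tau}\bm{\psi}=\bm{0}.
\end{equation*}
Since $D_t^{\tau}\bm{\psi}$ is constant compatible with $\bm{\psi}^{\tau}$ by Assumption \ref{assum: consistency}, the vanishing of $D_t^{\tau}\bm{\psi}$ forces the discrete conservation identity \eqref{eq: discrete conservation}; hence the $r$-step method defined by $\bm{F}_{\mathrm{DMM}}^{\tau}=\bm{0}$ is conservative in $\bm{\psi}^{\tau}$.

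The consistency claim is immediate from linearity of the consistency estimate \eqref{eq: consistency estimate def}. Since $\bm{F}=\dot{\bm{x}}-\bm{f}$ and $\bm{F}_{\mathrm{DMM}}^{\tau}=D_t^{\tau}\bm{x}-\bm{f}_{\mathrm{DMM}}^{\tau}$, for any $\bm{x}\in C^{p+q}(I\rightarrow U)$ the triangle inequality bounds the consistency error of $\bm{F}_{\mathrm{DMM}}^{\tau}$ towards $\bm{F}$ by the sum of those of $D_t^{\tau}\bm{x}$ towards $\dot{\bm{x}}$ (Assumption \ref{assum: consistency}) and of $\bm{f}_{\mathrm{DMM}}^{\tau}$ towards $\bm{f}$ (hypothesis), both $\mathcal{O}(\tau^q)$; adding them yields $\bm{F}_{\mathrm{DMM}}^{\tau}=\bm{F}+\mathcal{O}(\tau^q)$.

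For the two asymptotic estimates, the idea is that each left-hand side is the discrete analogue of a continuous expression that vanishes identically along an arbitrary curve, with all building blocks order-$q$ consistent. For the first estimate, the continuous chain rule combined with \eqref{eq: continuous correspondence identity a} makes the continuous counterpart of $\Lambda^{\tau}D_t^{\tau}\bm{x}-D_t^{\tau}\bm{\psi}\pm\partial_t^{\tau}\bm{\psi}$ vanish identically; for the second, \eqref{eq: continuous correspondence identity b} does the same for the counterpart of $\Lambda^{\tau}\bm{f}_{\mathrm{DMM}}^{\tau}+\partial_t^{\tau}\bm{\psi}$. Inserting these ``continuous zeros'' and regrouping, each estimate reduces to: (i) $\partial_t^{\tau}\bm{\psi}=\partial_t\bm{\psi}+\mathcal{O}(\tau^q)$, $D_t^{\tau}\bm{\psi}=D_t\bm{\psi}+\mathcal{O}(\tau^q)$, and $D_t^\tau\bm{x}=\dot{\bm{x}}+\mathcal{O}(\tau^q)$, all of which hold by Assumption \ref{assum: consistency}; and (ii) that the matrix--vector products $\Lambda^{\tau}D_t^{\tau}\bm{x}$ and $\Lambda^{\tau}\bm{f}_{\mathrm{DMM}}^{\tau}$ are order-$q$ consistent to $\Lambda\dot{\bm{x}}$ and $\Lambda\bm{f}$ respectively, which I would handle via the splittings $\Lambda^{\tau}D_t^{\tau}\bm{x}-\Lambda\dot{\bm{x}}=\Lambda^{\tau}(D_t^{\tau}\bm{x}-\dot{\bm{x}})+(\Lambda^{\tau}-\Lambda)\dot{\bm{x}}$, and likewise for the other product, bounding each summand with the operator norm $\norm{\cdot}_{m\times n}$.

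The step demanding genuine care — and the main obstacle — is precisely the product estimate in (ii): one must record a Leibniz-type rule for the $\mathcal{O}(\tau^q)$ notation, namely that the product of two order-$q$ consistent $r$-step functions is order-$q$ consistent to the product of their continuous limits. This requires a uniform-in-$\tau$ bound on $\norm{\Lambda^{\tau}}_{m\times n}$ — supplied by $\Lambda^{\tau}=\Lambda+\mathcal{O}(\tau^q)$ together with continuity of $\Lambda$ — and local boundedness of the continuous factors $\Lambda(t,\bm{x}(t))$, $\dot{\bm{x}}(t)$, $\bm{f}(t,\bm{x}(t))$ along the fixed smooth curve over the finitely many nodes entering the $r$-step evaluation. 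Everything else is routine bookkeeping with the triangle inequality and the definitions collected in Section \ref{sec:background}.
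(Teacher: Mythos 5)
Your proposal is correct and takes essentially the same route as the source: conservation by combining \eqref{eq: discrete correspondence a}--\eqref{eq: discrete correspondence b} at a zero of $\bm{F}^{\tau}_{\mathrm{DMM}}$ to force $D^{\tau}_t\bm{\psi}=\bm{0}$ and then invoking constant compatibility, and the consistency statements via the splitting $\Lambda^{\tau}\bm{g}^{\tau}-\Lambda\bm{g}=\Lambda^{\tau}(\bm{g}^{\tau}-\bm{g})+(\Lambda^{\tau}-\Lambda)\bm{g}$ together with the correspondence identities \eqref{eq: continuous correspondence identity a}--\eqref{eq: continuous correspondence identity b}, which is exactly the technique this paper deploys when proving the analogous \Cref{thm: A-DMM} (the theorem itself is only cited here from \cite{wan2017conservative} rather than reproved). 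Your reading of the first estimate with $+\partial^{\tau}_t\bm{\psi}$ is the right one --- the printed double minus is evidently a typo, since otherwise the continuous counterpart would be $-2\partial_t\bm{\psi}\neq\bm{0}$ for time-dependent $\bm{\psi}$.
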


\begin{remark}
The discrete compatibility condition \eqref{eq: discrete correspondence a} corresponds \emph{implicitly} to \eqref{eq: continuous correspondence identity a} by the chain rule:
\begin{equation*}
    \Lambda(t,\bm{x})\dot{\bm{x}}= (\partial_{\bm{x}}\bm{\psi})D_t\bm{x}=(\partial_{\bm{x}}\bm{\psi})D_t\bm{x}+ \partial_t\bm{\psi} -\partial_t\bm{\psi}=D_t\bm{\psi}-\partial_t\bm{\psi}.
\end{equation*}
\end{remark}

\section{Minimal $\ell_2$ Norm DMM}\label{sec: Minimal Norm DMM}
 So far, the construction of conservative schemes using DMM reduces to satisfying the discrete multiplier conditions \eqref{eq: discrete correspondence a} and \eqref{eq: discrete correspondence b}. While \eqref{eq: discrete correspondence a} can be resolved through the use of discrete chain rules as described in \cite[Thm. 22]{wan2017conservative}, resolving \eqref{eq: discrete correspondence b} relies on the local solvability of $\Lambda^\tau$. As discussed in \cite[Thm. 20]{wan2017conservative}, \eqref{eq: discrete correspondence b} can be satisfied by locally inverting an $m\times m$ submatrix $\tilde{\Lambda}^\tau$ of $\Lambda^\tau$ for a given dynamical system with $m$ conserved quantities. However, this traditional approach of DMM has two main drawbacks.
\begin{enumerate}
    \item First, analytical matrix inversion of a submatrix of $\Lambda^\tau$ becomes difficult, if not impractical, as $m$ increases. As it will be highlighted in examples later in \Cref{sec: Numerical results}, even a small number of conserved quantities can pose a significant challenge to construct conservative schemes using the traditional DMM approach.
    \item Second, due to the local nature of the rank of the submatrix $\tilde{\Lambda}^\tau$, its invertibility may vary depending on the phase space region where the conservative scheme is to be evaluated, making the traditional DMM approach cumbersome to implement for dynamical systems with complex phase spaces.
\end{enumerate}  
Indeed, there are alternate techniques, such as the \emph{method of undetermined coefficients} used in \cite{wan2017conservative} and \cite{ manybody2022}, that could alleviate some of these difficulties, but it still relies on the need to construct analytic conservative scheme, which can be difficult to apply for large dynamical systems with multiple conserved quantities. 

In this section, we tackle the problem of solving \eqref{eq: discrete correspondence b} systematically for large dynamical systems with multiple conserved quantities, without the need to construct analytic conservative schemes. The proposed new approach paves the way for the procedural construction of globally defined conservative schemes using DMM. Thus, this leads to a promising starting point for conservative discretizations of large dynamical systems which can only be evaluated procedurally and also in semi-discretization of partial differential equations.

\subsection{Minimal $\ell^2$ Norm Discrete Multiplier Method}\label{sec: Minimal l2 Norm Discrete Multiplier Method}
We define the \emph{Minimal $\ell^2$ Norm Discrete Multiplier Method}, or \emph{Minimal Norm DMM} (MN-DMM), as the conservative scheme
\begin{equation}\label{eq: modified f by right inverse}
\bm{f}^{\tau}_{\text{\scalebox{.7}{MN}}} := \bm{f}^{\tau} -(\Lambda^{\tau})^+(\Lambda^{\tau}\bm{f}^\tau+\partial^{\tau}_t\bm{\psi}),
\end{equation}
where $\bm{f}^{\tau}$ is \emph{any} consistent scheme to $\bm{f}$ and 
$
(\Lambda^{\tau})^+=(\Lambda^{\tau})^\top(\Lambda^{\tau}(\Lambda^{\tau})^\top)^{-1}$ is the unique \emph{right} Moore-Penrose pseudoinverse of $\Lambda^{\tau}$. By construction, it can be readily check that $\bm{f}^{\tau}_{\text{\scalebox{.7}{MN}}}$ satisfies \eqref{eq: discrete correspondence b}. We will discuss the theoretical analysis of the implicit scheme $\bm{f}^{\tau}_{\text{\scalebox{.7}{MN}}}$ shortly in \Cref{sec:theory}, where we will show that it is indeed conservative and well-defined for sufficiently small $\tau$. Different algorithmic choices for the practical evaluation of the second term on the right-hand side of \eqref{eq: modified f by right inverse} will be discussed in \Cref{sec: Practical Implementation}. 

Let us motivate the expression of \eqref{eq: modified f by right inverse} in two ways and the reasons for its name\footnote{For a general introduction to both \emph{under}determined and \emph{over}determined $\ell^2$ minimization problems, see the first chapters of the monograph \cite[Chap. 1 \& 2]{Bjorck96}, where orthogonal projections, normal equations and the Moore-Penrose inverse are studied in detail.}:

\begin{enumerate}[(I)]
\item First, one can view \eqref{eq: modified f by right inverse} as ``projecting" an $r$-step scheme $\bm{f}^{\tau}$ consistent to $\bm{f}$ onto a scheme satisfying \eqref{eq: discrete correspondence b}, hence resulting in a conservative scheme implicitly. To better see this, suppose that the vector of conserved quantities $\bm{\psi}$ is independent of time explicitly, i.e. $\partial^{\tau}_t\bm{\psi}=\bm 0$. Then, satisfying condition \eqref{eq: discrete correspondence b} is equivalent to asking for the numerical scheme $\bm{f}^{\tau}_{\text{\scalebox{.7}{MN}}}$ to be in $\ker(\Lambda^{\tau})=\ran((\Lambda^{\tau})^\top)^\bot$. In other words, we seek to find a numerical scheme that is orthogonal to the row space of the discrete multiplier matrix $\Lambda^\tau$. Since the projection operator onto the row space of $\Lambda^\tau$ can be expressed as \cite[Eq. 1.2.29]{Bjorck96}
$$
P_{\ran((\Lambda^\tau)^\top)}  = (\Lambda^{\tau})^+\Lambda^\tau,
$$
the projection operator onto its orthogonal complement is then given by
$$
P_{\ker(\Lambda^{\tau})} = P_{\ran((\Lambda^\tau)^\top))^\perp} = I_{n\times n}-P_{(\Lambda^\tau)^\top} = I_{n\times n}-(\Lambda^{\tau})^+\Lambda^\tau,
$$ 
where $I_{n\times n}$ denotes the $n\times n$ identity matrix.
So in the case of time-independent conserved quantities, the MN-DMM scheme is equivalent to $\bm{f}^\tau_{\text{\scalebox{.7}{MN}}} = P_{\ker(\Lambda^\tau)} \bm{f}^{\tau}$, which automatically satisfies the discrete multiplier condition \eqref{eq: discrete correspondence b}.
Indeed, this follows by definition, since
$$\Lambda^\tau P_{\ker(\Lambda^\tau)} = \Lambda^\tau(I_{n\times n} - (\Lambda^{\tau})^+\Lambda^\tau) = \Lambda^\tau-\Lambda^\tau=  0_{m\times n}.$$


\item Alternatively, we can also take the point of view that any scheme satisfying \eqref{eq: discrete correspondence b} solves an undetermined linear system. Specifically, the general solution to \eqref{eq: discrete correspondence b} is given by $\bm f^\tau_0+\bm f^\tau_P$, where $\bm f^\tau_0 \in \ker(\Lambda^\tau)$ and $\bm f^\tau_P$ is any particular solution of \eqref{eq: discrete correspondence b}. Note that by direct substitution, one particular choice is provided by 
$$
\bm f^\tau_P := \left(\Lambda^{\tau}\right)^+ (-\partial^{\tau}_t\bm{\psi}).
$$ 
Since $\ker(\Lambda^\tau) = \ran(P_{\ker(\Lambda^{\tau})})$, then for any consistent $\bm {f}^\tau$, $\bm f^\tau_0=P_{\ker(\Lambda^{\tau})} \bm {f}^\tau$ and we arrive at the MN-DMM scheme:
$$
\bm f^\tau_0+\bm f^\tau_P =  \bm{f}^{\tau} -(\Lambda^{\tau})^+(\Lambda^{\tau}\bm{f}^\tau+\partial^{\tau}_t\bm{\psi})
= \bm{f}^{\tau}_{\text{\scalebox{.7}{MN}}} .
$$
Moreover, such a particular choice for $\bm f^\tau_P$ has the \emph{minimal $\ell^2$ norm} in the sense that for any consistent scheme $\bm f^\tau$, the MN-DMM scheme $\bm{f}^{\tau}_{\text{\scalebox{.7}{MN}}} $ is the closest scheme to $\bm f^\tau$ in the $\ell^2$ norm satisfying \eqref{eq: discrete correspondence b}:
\begin{equation}\label{eq: underdetermined minimal norm}
\bm{f}^{\tau}_{\scalebox{.7}{MN}} = \argmin_{\Lambda^\tau\tilde{\bm f}^\tau = -\partial_t^\tau\bm \psi} \norm{\bm f^\tau-\tilde{\bm f}^\tau}_{2}.
\end{equation}
Indeed, this follows from the observation that for any $\tilde{\bm f}^\tau$ satisfying \eqref{eq: discrete correspondence b},
\begin{align*}
\|\bm{f}^{\tau}-\tilde{\bm{f}}^\tau\|_2^2 &= \|(\bm f^\tau-\bm{f}^{\tau}_{\text{\scalebox{.7}{MN}}})+(\bm{f}^{\tau}_{\text{\scalebox{.7}{MN}}} -\tilde{\bm{f}}^{\tau})\|_2^2\\
&= \|\bm{f}^\tau-{\bm f}^{\tau}_{\text{\scalebox{.7}{MN}}}\|_2^2+\|\bm{f}^{\tau}_{\text{\scalebox{.7}{MN}}} -\tilde{\bm f}^{\tau}\|_2^2 
\geq \,\|\bm{f}^{\tau}-\bm f^{\tau}_{\text{\scalebox{.7}{MN}}}\|_2^2.
\end{align*}
Orthogonality in the second equality follows from $(\bm f^\tau_{\scalebox{.7}{MN}}-\tilde{\bm f}^\tau) \in \ker(\Lambda^\tau)$ by \eqref{eq: discrete correspondence b}. Moreover, $(\bm f^\tau-\bm f^\tau_{\scalebox{.7}{MN}}) \perp \ker(\Lambda^\tau)$, since for any $\bm f^\tau_0 \in \ker(\Lambda^\tau)$,
\begin{align*}
    (\bm f^\tau-\bm f^{\tau}_{\text{\scalebox{.7}{MN}}})\cdot\bm f^\tau_0 &= (\Lambda^\tau)^+(\Lambda^\tau \bm f^\tau+\partial_t^\tau\bm \psi)\cdot \bm f^\tau_0 \\ &= (\Lambda^\tau(\Lambda^\tau)^\top)^{-1}(\Lambda^\tau \bm f^\tau+\partial_t^\tau\bm \psi)\cdot(\Lambda^\tau\bm f^\tau_0)= \bm 0.
\end{align*}
\end{enumerate}

\subsection{Theory} \label{sec:theory}

Before we prove that \eqref{eq: modified f by right inverse} defines a consistent conservative scheme, we will need show that the pseudoinverses $(\Lambda^{\tau})^+$ is well-defined and are uniformly bounded for small enough $\tau$. 

Recall the hypotheses of assumptions \ref{assum: nb of conserved quantities} and \ref{assum: consistency} introduced in \Cref{sec: Review of DMM}:
\begin{itemize}
    \item $\Lambda(t,\bm{x})$ has full row rank in $I\times U$, \label{hyp:rank}
    \item $\Lambda^{\tau}=\Lambda +\mathcal{O}(\tau^q)$\label{hyp:consistency}.
\end{itemize}
These hypotheses imply that for any $\bm{x}\in C^{p+q}(I\rightarrow\mathbb{R}^n)$, the rows of $\Lambda^{\tau}$ are linearly independent for $\tau$ small enough.
\begin{lemma}\label{lem: rank of Lambda tau} Under assumptions \ref{assum: nb of conserved quantities} and \ref{assum: consistency}, for any $\bm{x}\in C^{p+q}(I\rightarrow\mathbb{R}^n)$, there exists $\tau_0>0$ such that $\Lambda^{\tau}$ has for full row rank whenever $\tau<\tau_0$.
\end{lemma}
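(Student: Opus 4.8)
The plan is to characterise full row rank through the smallest singular value and then transfer the pointwise rank hypothesis on $\Lambda$ to $\Lambda^{\tau}$ by a perturbation argument. Recall that for $A\in M_{m\times n}(\mathbb{R})$ with $m\le n$, writing $\sigma_m(A)$ for the smallest of its $m$ singular values, one has that $A$ is of full row rank if and only if $\sigma_m(A)>0$; moreover the singular values are $1$-Lipschitz with respect to the operator norm, i.e. $|\sigma_m(A)-\sigma_m(B)|\le\|A-B\|_{m\times n}$ for all $A,B\in M_{m\times n}(\mathbb{R})$ (Weyl's perturbation bound for singular values; if $\|\cdot\|_{m\times n}$ is not the Euclidean operator norm the inequality still holds up to a fixed constant from equivalence of norms on $M_{m\times n}(\mathbb{R})$, absorbed into the generic constants below).

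With these tools the argument runs as follows. Fix the curve $\bm{x}$ and a compact time subinterval $[t^0,T]\subset I$ containing the nodes of interest --- the regime relevant for integrating the scheme over a finite horizon. By \Cref{thm: correspondence} the multiplier $\Lambda(t,\bm{x})=\partial_{\bm{x}}\bm{\psi}(t,\bm{x})$ is continuous on $I\times U$, and by \Cref{assum: nb of conserved quantities} the matrix $\Lambda(t,\bm{x}(t))$ has full row rank for every $t$; hence $t\mapsto\sigma_m(\Lambda(t,\bm{x}(t)))$ is continuous and strictly positive on $[t^0,T]$, so it attains a minimum $\underline{\sigma}:=\min_{t\in[t^0,T]}\sigma_m(\Lambda(t,\bm{x}(t)))>0$. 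Next, \Cref{assum: consistency} gives $\Lambda^{\tau}=\Lambda+\mathcal{O}(\tau^q)$, so by the consistency estimate \eqref{eq: consistency estimate def} there is a constant $C(\Lambda,\bm{x})>0$, independent of $\tau$ and of the node index $k$, with $\|\Lambda(t^k,\bm{x}(t^k))-\Lambda^{\tau}(t^k,\bm{x}(t^{k+1}),\dots,\bm{x}(t^{k-r+1}))\|_{m\times n}\le C(\Lambda,\bm{x})\,\tau^q$ at every node $t^k\in[t^0,T]$. Combining this with the Lipschitz bound on $\sigma_m$ yields
\begin{equation*}
\sigma_m\bigl(\Lambda^{\tau}(t^k,\bm{x}(t^{k+1}),\dots,\bm{x}(t^{k-r+1}))\bigr)\ \ge\ \sigma_m\bigl(\Lambda(t^k,\bm{x}(t^k))\bigr)-C(\Lambda,\bm{x})\,\tau^q\ \ge\ \underline{\sigma}-C(\Lambda,\bm{x})\,\tau^q
\end{equation*}
for every such node. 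Taking $\tau_0:=\bigl(\underline{\sigma}/C(\Lambda,\bm{x})\bigr)^{1/q}$, the right-hand side is strictly positive whenever $\tau<\tau_0$, uniformly in $k$, so $\Lambda^{\tau}$ has full row rank at every node, which is the claim.

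The main obstacle is the uniformity step: full row rank of $\Lambda$ is only a pointwise hypothesis, whereas the argument needs $\sigma_m(\Lambda(t,\bm{x}(t)))$ bounded away from zero simultaneously at all the (possibly infinitely many) nodes, and on an unbounded maximal interval of existence this can fail if $\sigma_m(\Lambda(t,\bm{x}(t)))\to 0$ as $t$ approaches the boundary of $I$. Restricting to a compact time interval removes the difficulty for any finite-time computation; for the full infinite node sequence one genuinely needs the a priori bound $\inf_{t\in I}\sigma_m(\Lambda(t,\bm{x}(t)))>0$. Everything else is the routine combination of a standard matrix-perturbation estimate with the order-$q$ consistency of $\Lambda^{\tau}$; note in particular that the argument never inverts an $m\times m$ submatrix of $\Lambda$, in contrast with the original DMM, which is precisely why the pseudoinverse $(\Lambda^{\tau})^{+}$ will be available in what follows.
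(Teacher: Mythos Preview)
Your proof is correct and rests on the same core idea as the paper's: full row rank is equivalent to positivity of the smallest singular value, and consistency $\Lambda^{\tau}=\Lambda+\mathcal{O}(\tau^q)$ keeps that quantity bounded away from zero. The paper packages this as a contradiction argument, assuming a sequence $\tau_k\to 0$ with $(\Lambda^{\tau_k})^{\top}\bm{u}_k=\bm{0}$ for unit $\bm{u}_k$, then noting that $\alpha:=\min_{\|\bm{v}\|=1}\|\Lambda^{\top}\bm{v}\|>0$ (which is precisely $\sigma_m(\Lambda)$) forces $0<\alpha\le\|\Lambda-\Lambda^{\tau_k}\|_{m\times n}\to 0$. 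Your direct argument via Weyl's perturbation bound is a bit cleaner, and you make explicit what the paper glosses over: uniformity of the lower bound on $\sigma_m(\Lambda(t,\bm{x}(t)))$ across all time nodes, which you secure by restricting to a compact subinterval (and you correctly flag that on an unbounded maximal interval one would need $\inf_{t\in I}\sigma_m(\Lambda(t,\bm{x}(t)))>0$). The paper's proof writes $\alpha$ as if $\Lambda$ were a single fixed matrix, so your treatment of this point is in fact more careful.
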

\begin{proof}
We argue by contradiction. Suppose that for some $\bm{x}\in C^{p+q}\left(I\rightarrow U\right)$, there exists a sequence of \emph{unit} vectors $(\bm{u}_k)_{k\in\mathbb{N}}\in \mathbb{R}^n$ and a sequence of parameters $(\tau_k)_{k\in\mathbb{N}}\in\mathbb{R}$ with $\tau_{k}\longrightarrow 0$  as $k\rightarrow \infty$, such that $(\Lambda^{\tau_k})^{\top}\bm{u}_k=\bm{0}$ for all $k\in\mathbb{N}$. Then, since the unit sphere $S^{n-1}=\{\bm{x}\in\mathbb{R}^n \vert \|\bm{x\|}=1\}$ is compact in $\mathbb{R}^m$, it follows from the hypothesis that $\Lambda$ has full row rank and the extreme value theorem that a positive lower bound  $\alpha:= \min_{\|\bm{v}\|=1}\|\Lambda^{\top}\bm{v}\|$ is achieved. Since $\Lambda^{\tau}$ is consistent with $\Lambda$, we therefore obtain the contradiction that
$$
0 < \alpha \leq \|\Lambda^{\top}\bm{u}_k\|\leq \|(\Lambda^{\top}-(\Lambda^{\tau_k})^{\top})\bm{u}_k\|\leq  \|\Lambda-\Lambda^{\tau_k}\|_{m\times n} \longrightarrow 0,\quad\text{as }k\rightarrow \infty.
$$\end{proof}

As a consequence, the Moore-Penrose right inverse of $\Lambda^{\tau}$ is well-defined and given by \cite[Eq. 1.2.27]{Bjorck96}
\begin{equation}\label{eq: moore-penrose explicit rep with AAT}
(\Lambda^{\tau})^+ = (\Lambda^{\tau})^{\top}(\Lambda^{\tau}(\Lambda^{\tau})^{\top})^{-1},\qquad \tau<\tau_0.\end{equation} 
Moreover, we see that $\bm{f}^{\tau}_{\text{\scalebox{.7}{MN}}}$ is a well-defined $r$-step function in the sense of \Cref{sec: Review of DMM}. Indeed, it is clear from $\smash{(\Lambda^{\tau}(\Lambda^{\tau})^{\top})^{-1} = (\text{det}(\Lambda^{\tau}(\Lambda^{\tau})^{\top}))^{-1}\text{adj}(\Lambda^{\tau}(\Lambda^{\tau})^{\top})}$  that the matrix $(\Lambda^{\tau})^+$ is of class $C^{l}$ whenever $\Lambda^{\tau}\in C^{l}$, $l\in\mathbb{N}$.

Thanks to the next lemma, the expression \eqref{eq: moore-penrose explicit rep with AAT} is also useful in proving that for any $\bm{x}\in C^{p+q}\left(I\rightarrow U\right)$, the parametrized family $\{(\Lambda^{\tau})^+\}_{\tau}$ is eventually uniformly bounded in $\tau$ as $\tau\rightarrow 0$.

\begin{lemma}\label{lem: existence and boundedness of right-inverse}
	Under assumptions \ref{assum: nb of conserved quantities} and \ref{assum: consistency}, for any $\bm{x}\in C^{p+q}(I\rightarrow U)$, there exists a parameter $\tau^0>0$ and a constant $C(\Lambda,\bm{x})>0$ independent of $\tau$ such that whenever $0<\tau<\tau^0$, the inverse $(\Lambda^{\tau}(\Lambda^{\tau})^{\top})^{-1}$ exists and satisfies
	\begin{equation}\label{eq: estimate of lemma boundedness}
	\|(\Lambda^{\tau}(\Lambda^{\tau})^{\top})^{-1}\|_{m\times m} \leq C(\Lambda,\bm{x}).
	\end{equation}
\end{lemma}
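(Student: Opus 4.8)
The plan is to leverage \Cref{lem: rank of Lambda tau}, which already gives us a $\tau_0>0$ below which $\Lambda^\tau$ has full row rank, so that $\Lambda^\tau(\Lambda^\tau)^\top$ is invertible. What remains is the \emph{uniform} bound \eqref{eq: estimate of lemma boundedness}. Since $\|(\Lambda^\tau(\Lambda^\tau)^\top)^{-1}\|_{m\times m}$ equals the reciprocal of the smallest eigenvalue of the symmetric positive-definite matrix $\Lambda^\tau(\Lambda^\tau)^\top$, and that smallest eigenvalue is $\min_{\|\bm v\|=1}\bm v^\top\Lambda^\tau(\Lambda^\tau)^\top\bm v = \min_{\|\bm v\|=1}\|(\Lambda^\tau)^\top\bm v\|^2 = \sigma_{\min}(\Lambda^\tau)^2$, it suffices to produce a $\tau$-independent positive lower bound on $\sigma_{\min}(\Lambda^\tau)$ for all small enough $\tau$.

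The key step is therefore a perturbation estimate on the smallest singular value. As in the proof of \Cref{lem: rank of Lambda tau}, compactness of the unit sphere $S^{n-1}$ together with continuity of $\bm v\mapsto\|\Lambda(t^k,\bm x(t^k))\,\bm v\|$ and the full-row-rank hypothesis on $\Lambda$ give a positive lower bound $\alpha:=\inf_k\min_{\|\bm v\|=1}\|(\Lambda(t^k,\bm x(t^k)))^\top\bm v\|>0$ (here one uses that $\bm x$ ranges over a fixed curve, so the relevant arguments $t^k$ lie in the compact closure of the trajectory and $\alpha$ is attained and positive). Then for any unit vector $\bm v$ and any $k$,
\[
\|(\Lambda^\tau)^\top\bm v\| \;\geq\; \|\Lambda^\top\bm v\| - \|(\Lambda^\tau-\Lambda)^\top\bm v\| \;\geq\; \alpha - \|\Lambda^\tau-\Lambda\|_{m\times n}.
\]
By the consistency hypothesis $\Lambda^\tau=\Lambda+\mathcal O(\tau^q)$, there is $\tau^0\in(0,\tau_0]$ such that $\|\Lambda^\tau-\Lambda\|_{m\times n}\leq\alpha/2$ whenever $0<\tau<\tau^0$, hence $\sigma_{\min}(\Lambda^\tau)\geq\alpha/2$ for such $\tau$. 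Consequently the smallest eigenvalue of $\Lambda^\tau(\Lambda^\tau)^\top$ is at least $\alpha^2/4$, and
\[
\|(\Lambda^\tau(\Lambda^\tau)^\top)^{-1}\|_{m\times m} \;\leq\; \frac{4}{\alpha^2} \;=:\; C(\Lambda,\bm x),
\]
which is the desired bound with a constant depending only on $\Lambda$ and $\bm x$ (through $\alpha$), not on $\tau$.

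The only delicate point — the ``main obstacle'' — is making the lower bound $\alpha$ genuinely uniform over the (possibly infinite) set of time nodes $\{t^k\}$ entering the estimate; this is handled exactly as in \Cref{lem: rank of Lambda tau} by noting that the consistency estimate \eqref{eq: consistency estimate def} is already formulated node-wise with a constant $C(\Lambda,\bm x)$ independent of $\tau$, so it is cleanest to phrase the argument through that constant rather than re-deriving compactness, i.e. write $\|\Lambda^\tau-\Lambda\|\le C(\Lambda,\bm x)\tau^q$ and absorb. Everything else is the standard singular-value/Rayleigh-quotient bookkeeping and a single triangle inequality, so no further obstacles are expected.
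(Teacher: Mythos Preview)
Your argument is correct, but it takes a different route from the paper's. The paper does not work with singular values at all: it first establishes the matrix-level estimate
\[
\|\Lambda^\tau(\Lambda^\tau)^\top-\Lambda\Lambda^\top\|_{m\times m}\leq(\|\Lambda^\tau\|_{m\times n}+\|\Lambda^\top\|_{n\times m})\|\Lambda^\tau-\Lambda\|_{m\times n},
\]
so that $\Lambda^\tau(\Lambda^\tau)^\top=\Lambda\Lambda^\top+\mathcal O(\tau^q)$, and then invokes a standard Neumann-series perturbation theorem for nonsingular matrices (cited from a numerical analysis text): once $\|\Lambda^\tau(\Lambda^\tau)^\top-\Lambda\Lambda^\top\|<\|(\Lambda\Lambda^\top)^{-1}\|^{-1}$, invertibility and a $\tau$-independent bound on the inverse follow in one stroke. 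Your approach instead bounds $\sigma_{\min}(\Lambda^\tau)$ from below by a direct triangle-inequality argument on $\|(\Lambda^\tau)^\top\bm v\|$, reusing the constant $\alpha$ from \Cref{lem: rank of Lambda tau}, and then reads off $\|(\Lambda^\tau(\Lambda^\tau)^\top)^{-1}\|\leq 4/\alpha^2$ via the Rayleigh quotient. This is more elementary and self-contained (no external perturbation lemma needed) and yields an explicit constant; the paper's version is shorter on the page but black-boxes the key step. One small slip: the unit vectors $\bm v$ live in $\mathbb R^m$, so the relevant sphere is $S^{m-1}$, not $S^{n-1}$.
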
	
\begin{proof}
We see from combining the estimates
	\begin{align*}
	\|\Lambda^{\tau}(\Lambda^{\tau})^{\top}-\Lambda\Lambda^{\top}\|_{m\times m}
	&\leq \|\Lambda^{\tau}(\Lambda^{\tau})^{\top}-\Lambda^{\tau}\Lambda^{\top}\|_{m\times m}+\,\|\Lambda^{\tau}\Lambda^{\top}-\Lambda\Lambda^{\top}\|_{m\times m} \nonumber\\
	&\leq (\,\,\norm{\Lambda^{\tau}}_{m \times n}+\|\Lambda^{\top}\|_{n\times m})
	\,\|\Lambda^{\tau}-\Lambda\|_{m\times n} 
	\end{align*}
and
\begin{align}
	\label{eq: bound on Lambda tau}
	\|\Lambda^{\tau}\|_{m\times n} &\leq \|\Lambda^{\tau}-\Lambda\|_{m\times n}+\,\|\Lambda\|_{m\times n},
	\end{align}
that $\Lambda^{\tau}(\Lambda^{\tau})^{\top}=\Lambda\Lambda^{\top}+\mathcal{O}(\tau^q)$. Therefore, the desired conclusion follows from a well-known result concerning perturbation of regular matrices \cite[Thm. 1.5]{quarteroni2010numerical}, which in the current setting states that if there exists $\tau_0>0$ such that
	\begin{equation*}
	\|\Lambda^{\tau}(\Lambda^{\tau})^{\top}-\Lambda\Lambda^{\top}\|_{m\times m} < \|(\Lambda\Lambda^{\top})^{-1}\|_{m\times m}^{-1},
	\end{equation*}
	then the matrices $\Lambda^{\tau}\left(\Lambda^{\tau}\right)^{\top}$ are invertible on the interval $(0,\tau_0)$ and bounded by a constant independent of $\tau$.
\end{proof}
\begin{corollary}\label{cor: moore penrose inverse existence}
Under assumptions \ref{assum: nb of conserved quantities} and \ref{assum: consistency}, for any $\bm{x}\in C^{p+q}(I\rightarrow U)$, there exists a parameter $\tau^0>0$ and a constant $C(\Lambda,\bm{x})>0$ independent of $\tau$ such that
\begin{equation*}
    	\|(\Lambda^{\tau})^+\|_{n\times m} \leq C(\Lambda,\bm{x}),\qquad \tau<\tau_0.
\end{equation*}
\end{corollary}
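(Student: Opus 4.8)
The strategy is to exploit the explicit representation \eqref{eq: moore-penrose explicit rep with AAT} of the right pseudoinverse, which is valid for $\tau<\tau_0$ by \Cref{lem: rank of Lambda tau}, together with submultiplicativity of the operator norm. Writing $(\Lambda^{\tau})^+ = (\Lambda^{\tau})^{\top}(\Lambda^{\tau}(\Lambda^{\tau})^{\top})^{-1}$, one immediately gets
\begin{equation*}
\|(\Lambda^{\tau})^+\|_{n\times m} \leq \|(\Lambda^{\tau})^{\top}\|_{n\times m}\,\|(\Lambda^{\tau}(\Lambda^{\tau})^{\top})^{-1}\|_{m\times m},
\end{equation*}
so it suffices to bound each of the two factors uniformly in $\tau$ for small $\tau$.

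For the second factor, \Cref{lem: existence and boundedness of right-inverse} already provides a parameter $\tau^0>0$ and a constant $C(\Lambda,\bm{x})>0$, independent of $\tau$, with $\|(\Lambda^{\tau}(\Lambda^{\tau})^{\top})^{-1}\|_{m\times m}\leq C(\Lambda,\bm{x})$ whenever $0<\tau<\tau^0$. For the first factor, note $\|(\Lambda^{\tau})^{\top}\|_{n\times m}=\|\Lambda^{\tau}\|_{m\times n}$, and by the triangle inequality \eqref{eq: bound on Lambda tau} together with the consistency hypothesis $\Lambda^{\tau}=\Lambda+\mathcal{O}(\tau^q)$ from \Cref{assum: consistency}, there is a constant $C(\Lambda,\bm{x})>0$ and a threshold $\tau_1>0$ with $\|\Lambda^{\tau}-\Lambda\|_{m\times n}\leq C(\Lambda,\bm{x})\tau^q$ for all $\tau$, hence $\|\Lambda^{\tau}\|_{m\times n}\leq C(\Lambda,\bm{x})\tau_1^{q}+\|\Lambda\|_{m\times n}$ is uniformly bounded for $\tau<\tau_1$ (one can even simply take any fixed $\tau_1$ and absorb the constant). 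Setting $\tau_0:=\min\{\tau^0,\tau_1\}$ (reusing the name $\tau_0$ as in \Cref{lem: rank of Lambda tau}, which it does not exceed) and multiplying the two bounds yields the claimed estimate, with the resulting product constant again written generically as $C(\Lambda,\bm{x})$.

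There is essentially no obstacle here: the Corollary is a direct bookkeeping consequence of the previous lemma, submultiplicativity, and the elementary bound on $\|\Lambda^{\tau}\|_{m\times n}$ that was already recorded in the proof of \Cref{lem: existence and boundedness of right-inverse}. The only minor care needed is to take the minimum of the various $\tau$-thresholds so that all the invoked facts (full row rank, existence of the inverse, smallness of the perturbation) hold simultaneously, and to note that the generic constants from different results need not — and should not — be identified.
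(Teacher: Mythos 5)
Your proposal is correct and follows essentially the same route as the paper: factor $(\Lambda^{\tau})^+ = (\Lambda^{\tau})^{\top}(\Lambda^{\tau}(\Lambda^{\tau})^{\top})^{-1}$, apply submultiplicativity, invoke \Cref{lem: existence and boundedness of right-inverse} for the inverse factor, and bound $\|\Lambda^{\tau}\|_{m\times n}$ via consistency as in \eqref{eq: bound on Lambda tau}. Your handling of the $\tau$-thresholds is slightly more explicit than the paper's, which simply takes $\tau_0$ as in the lemma's proof, but the argument is the same.
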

\begin{proof}
Based on \eqref{eq: moore-penrose explicit rep with AAT}, we find that for any $\bm{x}\in C^{p+q}(I\rightarrow U)$, we have
$$
	\|(\Lambda^{\tau})^+\|_{n\times m}=\|(\Lambda^\tau)^{\top}(\Lambda^{\tau}(\Lambda^{\tau})^{\top})^{-1}\|_{n\times m}
	\leq\|(\Lambda^\tau)^{\top}\|_{m\times m}\|(\Lambda^{\tau}(\Lambda^{\tau})^{\top})^{-1}\|_{m\times m}
$$
for $\tau$ small enough. In particular, $\tau_0$ can be chosen as in the proof of \Cref{lem: existence and boundedness of right-inverse}.
\end{proof}

The next theorem shows that upon satisfying the discrete multiplier condition \eqref{eq: discrete correspondence a}, which as previously mentioned can be resolved by discrete chain rules \cite{wan2017conservative}, the MN-DMM indeed leads to a conservative scheme.

\begin{theorem}\label{thm: A-DMM}
	Under assumptions \ref{assum: nb of conserved quantities} and \ref{assum: consistency}, suppose that the discrete quantities of \Cref{sec: Review of DMM} satisfy the compatibility condition \eqref{eq: discrete correspondence a} for all $(t^k,\bm{x}^{k+1},...,\bm{x}^{k-r+1})\in I\times U^{r+1}$ such that
\begin{equation}\label{FMN r-step method}
    \bm{F}_{\text{\emph{\scalebox{.7}{MN}}}}^{\tau}(t^k,\bm{x}^{k+1},...,\bm{x}^{k-r+1})=\bm{0},
\end{equation}
    where 
    $$\bm{F}_{\text{\emph{\scalebox{.7}{MN}}}}^{\tau}:=D^{\tau}_t\bm{x}-\bm{f}^{\tau}_{\text{\emph{\scalebox{.7}{MN}}}}.$$
    Then, the $r$-step method defined by \eqref{FMN r-step method}
	is conservative in $\bm{\psi}^{\tau}$. Moreover, it is consistent of at least order $q$ to the function $\bm{F}$ defined in \eqref{eq: quasilinear ODE}, and for any $\bm{x}\in C^{p+q}\left(I\rightarrow\mathbb{R}^n\right)$ the discrete quantities satisfy
	\begin{subequations}
	\begin{align}
	\Lambda^{\tau}D^{\tau}_t\bm{x} - D^{\tau}_t\bm{\psi} - \partial^{\tau}_t\bm{\psi} &= \mathcal{O}(\tau^q),\\
	\Lambda^{\tau}\bm{f}_{\text{\emph{\scalebox{.7}{MN}}}}^{\tau} + \partial^{\tau}_t \bm{\psi} &= \mathcal{O}(\tau^q).
	\end{align}
	\end{subequations}
\end{theorem}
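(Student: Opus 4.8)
The plan is to verify each of the three claims of the theorem separately, leaning on the structural identity $\bm{f}^{\tau}_{\text{\scalebox{.7}{MN}}} = \bm{f}^{\tau} - (\Lambda^{\tau})^+(\Lambda^{\tau}\bm{f}^\tau + \partial^{\tau}_t\bm{\psi})$ and on the boundedness estimates already established in \Cref{lem: existence and boundedness of right-inverse} and \Cref{cor: moore penrose inverse existence}. First I would establish the exact algebraic identity that $\bm{f}^{\tau}_{\text{\scalebox{.7}{MN}}}$ satisfies the discrete multiplier condition \eqref{eq: discrete correspondence b}: multiplying on the left by $\Lambda^{\tau}$ and using $\Lambda^{\tau}(\Lambda^{\tau})^+ = I_{m\times m}$ (valid for $\tau<\tau_0$ by \Cref{lem: rank of Lambda tau}), one gets $\Lambda^{\tau}\bm{f}^{\tau}_{\text{\scalebox{.7}{MN}}} = \Lambda^{\tau}\bm{f}^\tau - (\Lambda^{\tau}\bm{f}^\tau + \partial^{\tau}_t\bm{\psi}) = -\partial^{\tau}_t\bm{\psi}$, which is \eqref{eq: discrete correspondence b} exactly, with no error term. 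This is the key point: by construction the second multiplier condition holds identically, so the hypothesis of \Cref{thm: conservative scheme theory} that remains to be checked on the scheme $\bm{F}^{\tau}_{\text{\scalebox{.7}{MN}}}$ is precisely the assumed condition \eqref{eq: discrete correspondence a}. Once this is observed, conservativity in $\bm{\psi}^{\tau}$ follows immediately by invoking \Cref{thm: conservative scheme theory} with the choice $\bm{f}^{\tau}_{\text{\scalebox{.7}{DMM}}} = \bm{f}^{\tau}_{\text{\scalebox{.7}{MN}}}$, provided we can also confirm that $\bm{f}^{\tau}_{\text{\scalebox{.7}{MN}}}$ is itself a consistent $r$-step function of order $q$ to $\bm{f}$, which is the second thing to check.

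For the consistency claim, I would show $\bm{f}^{\tau}_{\text{\scalebox{.7}{MN}}} = \bm{f} + \mathcal{O}(\tau^q)$ by bounding the correction term $(\Lambda^{\tau})^+(\Lambda^{\tau}\bm{f}^\tau + \partial^{\tau}_t\bm{\psi})$ in the $\ell^2$ norm. Using submultiplicativity and \Cref{cor: moore penrose inverse existence}, it suffices to show $\Lambda^{\tau}\bm{f}^\tau + \partial^{\tau}_t\bm{\psi} = \mathcal{O}(\tau^q)$ along any fixed $\bm{x}\in C^{p+q}(I\rightarrow U)$. This is where the continuous multiplier condition \eqref{eq: continuous correspondence identity b} enters: we write $\Lambda^{\tau}\bm{f}^\tau + \partial^{\tau}_t\bm{\psi} = (\Lambda^{\tau}-\Lambda)\bm{f}^\tau + \Lambda(\bm{f}^\tau-\bm{f}) + (\Lambda\bm{f} + \partial_t\bm{\psi}) + (\partial^{\tau}_t\bm{\psi}-\partial_t\bm{\psi})$, where the third grouped term vanishes by \eqref{eq: continuous correspondence identity b} and each of the remaining three is $\mathcal{O}(\tau^q)$ by \Cref{assumption}~\ref{assum: consistency} (consistency of $\Lambda^\tau$, $\bm{f}^\tau$, $\partial^{\tau}_t\bm{\psi}$) together with the local boundedness of $\bm{f}^\tau$, $\Lambda$ along $\bm{x}$ on the relevant time levels. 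Hence $\bm{f}^{\tau}_{\text{\scalebox{.7}{MN}}} = \bm{f}^\tau + \mathcal{O}(\tau^q) = \bm{f} + \mathcal{O}(\tau^q)$, so $\bm{F}^{\tau}_{\text{\scalebox{.7}{MN}}} = D^{\tau}_t\bm{x} - \bm{f}^{\tau}_{\text{\scalebox{.7}{MN}}}$ is consistent of order at least $q$ to $\bm{F}$. The same estimate immediately gives the second displayed error bound $\Lambda^{\tau}\bm{f}^{\tau}_{\text{\scalebox{.7}{MN}}} + \partial^{\tau}_t\bm{\psi} = \mathcal{O}(\tau^q)$ — in fact it is identically zero by the algebraic identity above, which is a stronger statement. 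The first displayed error bound $\Lambda^{\tau}D^{\tau}_t\bm{x} - D^{\tau}_t\bm{\psi} - \partial^{\tau}_t\bm{\psi} = \mathcal{O}(\tau^q)$ does not involve $\bm{f}^{\tau}_{\text{\scalebox{.7}{MN}}}$ at all and follows purely from the consistency hypotheses on $\Lambda^\tau$, $D^{\tau}_t\bm{x}$, $D^{\tau}_t\bm{\psi}$, $\partial^{\tau}_t\bm{\psi}$ and the continuous chain-rule identity $\Lambda D_t\bm{x} = D_t\bm{\psi} - \partial_t\bm{\psi}$ (this is exactly the reasoning recorded in the remark following \Cref{thm: conservative scheme theory}), so I would reuse that argument verbatim.

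The main obstacle — and it is a mild one — is the bookkeeping needed to make the $\mathcal{O}(\tau^q)$ claims rigorous in the $r$-step setting: the quantities $\bm{f}^\tau$, $\Lambda^\tau$, $(\Lambda^{\tau})^+$ are evaluated at the shifted arguments $(t^k,\bm{x}(t^{k+1}),\dots,\bm{x}(t^{k-r+1}))$ while the continuous quantities are evaluated at $(t^k,\bm{x}(t^k))$, so one must be a little careful that the consistency definition \eqref{eq: consistency estimate def} is being applied to the correct combinations and that the constants depend only on $\bm{x}$, $\Lambda$, $\bm{f}$ and not on $\tau$. A small subtlety worth flagging is that $(\Lambda^{\tau})^+$ need not be consistent to any natural "$\Lambda^+$" uniformly unless one argues as in \Cref{lem: existence and boundedness of right-inverse}; fortunately we only need the uniform bound from \Cref{cor: moore penrose inverse existence}, not consistency of the pseudoinverse, so the product $(\Lambda^{\tau})^+ \cdot \mathcal{O}(\tau^q) = \mathcal{O}(\tau^q)$ goes through cleanly. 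Everything else is a direct appeal to \Cref{thm: conservative scheme theory} with $\bm{f}^{\tau}_{\text{\scalebox{.7}{DMM}}}$ specialized to $\bm{f}^{\tau}_{\text{\scalebox{.7}{MN}}}$.
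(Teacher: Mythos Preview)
Your proposal is correct and follows essentially the same approach as the paper: verify \eqref{eq: discrete correspondence b} holds exactly by construction, establish $\bm{f}^{\tau}_{\text{\scalebox{.7}{MN}}} = \bm{f} + \mathcal{O}(\tau^q)$ via \Cref{cor: moore penrose inverse existence} and a telescoping estimate on $\Lambda^{\tau}\bm{f}^\tau + \partial^{\tau}_t\bm{\psi}$ that uses \eqref{eq: continuous correspondence identity b}, then invoke \Cref{thm: conservative scheme theory}. The only cosmetic difference is the ordering of the telescoping (the paper writes $\Lambda^{\tau}(\bm{f}^\tau-\bm{f}) + (\Lambda^\tau-\Lambda)\bm{f} + (\Lambda\bm{f}+\partial^{\tau}_t\bm{\psi})$ whereas you swap the roles of $\Lambda^\tau,\Lambda$ and $\bm{f}^\tau,\bm{f}$), and your remark that $\Lambda^{\tau}\bm{f}^{\tau}_{\text{\scalebox{.7}{MN}}} + \partial^{\tau}_t\bm{\psi}$ is in fact identically zero is a nice sharpening.
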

\begin{proof}
 Our goal is to resort to \Cref{thm: conservative scheme theory}. Two ingredients are required. 
	
	First, we need to confirm that the discrete function $\bm{f}_{\text{\scalebox{.7}{MN}}}^{\tau}$ verifies the second discrete multiplier condition \eqref{eq: discrete correspondence b}. This holds by construction. Since by definition $\Lambda^{\tau}\left(\Lambda^{\tau}\right)^{+}=I_{m\times m}$, multiplying both sides of \eqref{eq: modified f by right inverse} by $\Lambda^{\tau}$ immediately yields
	\begin{equation*}
	\Lambda^{\tau}\bm{f}_{\text{\scalebox{.7}{MN}}}^{\tau} = \Lambda^{\tau}\bm{f}^{\tau}- \Lambda^{\tau}(\Lambda^{\tau})^{+}(\Lambda^{\tau}\bm{f}^{\tau} + \partial^{\tau}_t\bm \psi) = -\partial^{\tau}_t\bm \psi.
	\end{equation*}
	
	Second, we need to show that $\bm{f}_{\text{\scalebox{.7}{MN}}}^{\tau}= \bm{f}+\mathcal{O}(\tau^q)$. Since the triangle inequality yields
	\begin{equation}\label{eq: consistency estimate ADMM}
	\|\bm{f}-\bm{f}_{\text{\emph{\scalebox{.7}{MN}}}}^{\tau}\|
	\leq  \|\bm{f}-\bm{f}^{\tau}\|+ \|\left(\Lambda^{\tau}\right)^{+}\|_{n\times m}\|\Lambda^{\tau}\bm{f}^{\tau} + \partial^{\tau}_t\psi\|,
	\end{equation}
	it follows from \Cref{cor: moore penrose inverse existence} that we only need to verify that $\Lambda^{\tau}\bm{f}^{\tau} + \partial^{\tau}_t\psi=\mathcal{O}\left(\tau^q\right)$.
	
	Consider the estimate
	\begin{align}\label{eq: bound on lambda f plus partial phi}
	\|\Lambda^{\tau}\bm{f}^{\tau} + \partial^{\tau}_t\bm{\psi}\| 
	&\leq  \|\Lambda^{\tau}\bm{f}^{\tau}-\Lambda^{\tau}\bm{f}\| + \|\Lambda^{\tau}\bm{f}-\Lambda\bm{f}\| + \|\Lambda\bm{f}+\partial^{\tau}_t\bm{\psi}\|\nonumber \\
	&\leq\|\Lambda^{\tau}\|_{m\times n}\|\bm{f}^{\tau}-\bm{f}\| +\|\Lambda^{\tau}-\Lambda\|_{m \times n}\|\bm{f}\|+ \|\Lambda\bm{f}+\partial^{\tau}_t\bm{\psi}\|.
	\end{align}
	The key observation is that since $\Lambda$ is a conservation law multiplier of $\bm{F}$ associated to $\bm{\psi}$ by hypothesis, it satisfies the correspondence identity \eqref{eq: continuous correspondence identity b}, i.e. $\Lambda\bm{f}=-\partial_t\bm{\psi}$. Introducing $\partial_t\bm{\psi}$ in the last term of \eqref{eq: bound on lambda f plus partial phi} yields
	\begin{equation}\label{eq: psi is psi tau}
	\|\Lambda\bm{f}+\partial^{\tau}_t\bm{\psi}\|= \|\partial_t\bm{\psi}-\partial^{\tau}_t\bm{\psi}\|.
	\end{equation}

	Upon inserting \eqref{eq: psi is psi tau} in \eqref{eq: bound on lambda f plus partial phi}, then \eqref{eq: bound on lambda f plus partial phi} in \eqref{eq: consistency estimate ADMM}, the proof follows by consistency of $\bm{f}^{\tau}$, $\Lambda^{\tau}$ and $\partial_t^{\tau}\bm{\psi}$ to $\bm{f}$, $\Lambda$ and $\partial_t\bm{\psi}$, respectively.
\end{proof}	

\section{Practical Implementations}\label{sec: Practical Implementation}
As the MN-DMM scheme \eqref{FMN r-step method} is implicitly defined, we turn to an iterative fixed point algorithm in order to converge to the desired conservative scheme and simultaneously solve the associated nonlinear equations. 
Following \cite{wan2017conservative}, we will focus on one-step conservative methods constructed by using divided differences for 
\begin{subequations}
\begin{align}
\bm{\psi}^{\tau}(t^k,\bm{x}^k)&:=\bm{\psi}(t^k,\bm{x}^k),\label{eq: 1st order psi}\\
D^{\tau}_{t}\bm{x}(t^k,\bm{x}^{k+1},\bm{x}^k)&:=\frac{\bm{x}^{k+1}-\bm{x}^k}{t^{k+1}-t^k}, \label{eq: 1st order Dt} \\
D^{\tau}_t\bm{\psi}(t^k,\bm{x}^{k+1},\bm{x}^k)&:=\frac{\bm{\psi}(t^k,\bm{x}^{k+1})-\bm{\psi}(t^k,\bm{x}^{k})}{t^{k+1}-t^k},\label{eq: 1st order Dtpsi}
\end{align}
\end{subequations}
which are used throughout in the numerical results presented in \Cref{sec: Numerical results}.

It is clear that these discrete quantities are consistent single-step functions of at least first-order to their continuous counterpart. Conveniently, constant compatibility of $D^{\tau}_t\bm{\psi}$ with $\bm{\psi}^{\tau}$ is immediate. We refer to \cite{wan2017conservative} for the derivation of a single-step function $\Lambda^{\tau}$ using discrete chain rules that satisfy condition \eqref{eq: discrete correspondence a}. 

Some higher-order multi-step DMM schemes were constructed in \cite{wan2018stability}. Also note that first-order symmetric schemes can turn out to be high-order as well \cite[Chapter II.3, Theorem 3.2]{hair06Ay}, which was studied in the conservative DMM schemes for many-body problems \cite{manybody2022}, vortex blob methods \cite{vortex2022}, and Hamiltonian Monte Carlo methods \cite{chmc22}.

\subsection{Analytic expressions of MN-DMM for small number of conserved quantities}
For a small number of conserved quantities, it is in fact analytically tractable to write out the expressions of MN-DMM given by \eqref{eq: modified f by right inverse}. For ease of future reference, we write out the explicit MN-DMM schemes for preserving one and two conserved quantities, i.e. $m=1$ and $2$. Specifically, the case $m=1$ leads to a simple way to enable conservation for an arbitrary consistent scheme and in a gradient-free manner. For instance, this could be highly relevant to physical systems where energy conservation is important, such as for Hamiltonian systems. 

\subsubsection{Analytic expression for m=1}
\label{sec: analytic m=1}

For a single scalar conserved quantity, the discrete multiplier matrix $\Lambda^\tau\in M_{1\times n}(\mathbb{R})$ is the row vector

\[
\Lambda^\tau(t^k,\bm x^{k+1}, \bm x^k) := \frac{\Delta \psi}{\Delta \bm x}^{\top}(t^k,\bm x^{k+1}, \bm x^k),
\] where $\dfrac{\Delta \psi}{\Delta \bm x}$ denotes the column vector of partial divided differences of $\psi$ with respect to $\bm x$ for a specific permutation of $S_{n+1}$ satisfying the discrete chain rule \eqref{eq: discrete correspondence a}\footnote{Details on divided difference calculus and explicit formulas for $ \frac{\Delta \psi}{\Delta \bm x}$ are in Appendix B of \cite{wan2017conservative}.}. Since $\Lambda^\tau {(\Lambda^\tau)}^{\top} = \norm{\dfrac{\Delta \psi}{\Delta \bm x}}_2^2$ is a scalar quantity in this case, we see that the MN-DMM scheme of \eqref{eq: modified f by right inverse} for $m=1$ is given by
\begin{equation}
    \label{eq: MN-DMM m=1}
    \bm f_{\text{\scalebox{.7}{MN}}}^\tau:=\bm f^\tau - \frac{1}{\norm{\frac{\Delta \psi}{\Delta \bm x}}_2^2}\left(\frac{\Delta \psi}{\Delta \bm x}^{\top}\bm f^\tau+\partial_t^\tau \psi\right)\frac{\Delta \psi}{\Delta \bm x},
\end{equation}where we have suppressed the arguments $(t^k,\bm x^{k+1}, \bm x^k)$ for clarity. As seen in \Cref{sec: Minimal l2 Norm Discrete Multiplier Method}, for time-independent $\psi$, \eqref{eq: MN-DMM m=1} can be viewed as subtracting off the projection of the scheme $\bm f^\tau$ onto the orthogonal complement of the discrete multiplier's kernel. Since the kernel is in this case the multi-dimensional plane perpendicular to the vector $\frac{\Delta \psi}{\Delta \bm x}$, its orthogonal complement is simply the span of the latter, and the resulting scheme reads
\begin{equation}\label{eq: MN-DMM m=1 time independent}
    \bm f_{\text{\scalebox{.7}{MN}}}^\tau:=\bm f^\tau - \alpha \frac{\Delta \psi}{\Delta \bm x}, \qquad\text{ where } \alpha:=\frac{1}{\norm{\frac{\Delta \psi}{\Delta \bm x}}_2^2}\frac{\Delta \psi}{\Delta \bm x}^{\top}\bm f^\tau.
\end{equation}
In other words, $\alpha \frac{\Delta \psi}{\Delta \bm x}$ is the scalar projection of $\bm{f}^{\tau}$ onto $\frac{\Delta \psi}{\Delta \bm x}$ and $\bm f_{\text{\scalebox{.7}{MN}}}^\tau$ is the vector projection of $\bm{f}^{\tau}$ onto the kernel of $\Lambda^\tau=\frac{\Delta \psi}{\Delta \bm x}^{\top}$, as discussed in \Cref{sec: Minimal l2 Norm Discrete Multiplier Method}. The expression in \eqref{eq: MN-DMM m=1 time independent} conveys how MN-DMM schemes arise as $\ell^2$-projections. It also demonstrates the ease with which a consistent scheme can be amended to a consistent conservative one.

\subsubsection{Analytic expression for m=2}
For two conserved quantities, the discrete multiplier matrix $\Lambda^\tau \in M_{2\times n}(\mathbb{R})$ is given by

\[
\Lambda^\tau(t^k,\bm x^{k+1}, \bm x^k) := \begin{pmatrix}\dfrac{\Delta \psi_1}{\Delta \bm x}^{\top}(t^k,\bm x^{k+1}, \bm x^k) \\
\dfrac{\Delta \psi_2}{\Delta \bm x}^{\top}(t^k,\bm x^{k+1}, \bm x^k) \end{pmatrix},
\] where $\dfrac{\Delta \psi_i}{\Delta \bm x}$ again denotes the column vector of partial divided differences of $\psi_i$ with respect to $\bm x$, similar to the $m=1$ case. With $\Lambda^\tau {(\Lambda^\tau)}^{\top}$ now being a $2\times 2$ matrix, the MN-DMM scheme of \eqref{eq: modified f by right inverse} for $m=2$ takes the explicit form

\begin{align}
    \label{eq: MN-DMM m=2}
    \bm f_{\text{\scalebox{.7}{MN}}}^\tau:= \bm f^\tau - \frac{\begin{pmatrix} \frac{\Delta \psi_1}{\Delta \bm x} & \frac{\Delta \psi_2}{\Delta \bm x} \end{pmatrix}}{\det(\Lambda^\tau {(\Lambda^\tau)}^{\top})}\begin{pmatrix}\norm{\frac{\Delta \psi_2}{\Delta \bm x}}_2^2 & -\frac{\Delta \psi_1}{\Delta \bm x}^{\top}\frac{\Delta \psi_2}{\Delta \bm x}\\
-\frac{\Delta \psi_2}{\Delta \bm x}^{\top}\frac{\Delta \psi_1}{\Delta \bm x} & \norm{\frac{\Delta \psi_1}{\Delta \bm x}}_2^2\end{pmatrix}\begin{pmatrix}\frac{\Delta \psi_1}{\Delta \bm x}^{\top}\bm f^\tau+ \partial_t^\tau \psi_1\\ \frac{\Delta \psi_2}{\Delta \bm x}^{\top}\bm f^\tau+ \partial_t^\tau \psi_2\end{pmatrix},
\end{align}
where $\det(\Lambda^\tau {(\Lambda^\tau)}^{\top}) = \norm{\frac{\Delta \psi_1}{\Delta \bm x}}_2^2\norm{\frac{\Delta \psi_2}{\Delta \bm x}}_2^2- \left(\frac{\Delta \psi_2}{\Delta \bm x}^{\top}\frac{\Delta \psi_1}{\Delta \bm x}\right)^2$.

In principle, MN-DMM schemes for other small $m$ values can also be written out analytically. However, for practical implementations involving $m>2$, we instead refer to Section \ref{sec: MN-DMM Algorithm} to \ref{sec: SVD MN-DMM} for more general algorithms that implicitly construct conservative schemes without resorting to analytic computations.

\subsection{Direct MN-DMM Algorithm}\label{sec: MN-DMM Algorithm}
For an arbitrary number $m$ of conserved quantities, we now present a fixed-point iteration algorithm associated with the scheme \eqref{FMN r-step method} introduced in \Cref{thm: A-DMM}, where consistency and conservative properties were shown. 
Before we compare different ways of computing the pseudoinverse expression involved in \eqref{FMN r-step method}, let us describe how to solve the implicit scheme
$$\bm{0} = \bm{F}_{\text{\emph{\scalebox{.7}{MN}}}}^{\tau}(t^k,\bm{x}^{k+1},\bm{x}^{k})= D_t^\tau \bm x-\bm{f}^{\tau}_{\text{\emph{\scalebox{.7}{MN}}}}(t^k,\bm{x}^{k+1},\bm{x}^{k}).$$ More explicitly, this is equivalent to the equations
\begin{equation}\label{eq: MN DMM explicit}
\bm x^{k+1} =\bm x^k+(t^{k+1}-t^k)\bm{f}^{\tau}_{\text{\emph{\scalebox{.7}{MN}}}}(t^k,\bm{x}^{k+1},\bm{x}^{k}).
\end{equation}

We will employed a fixed-point iteration to solve for $\bm x^{k+1}$ in \eqref{eq: MN DMM explicit}, which can also be viewed as a predictor-corrector method. For brevity and clarity, we shall consider an uniform  time step\footnote{Similar results can be derived with variable time steps by replacing $\tau$ with $\tau_k := t^{k+1}-t^k$ and ensuring $\tau := \sup_{k}(t^{k+1}-t^k)$ satisfies the contraction criteria in the fixed point iteration.} $\tau = t^{k+1}-t^k$ for all $k$, and denote the unknown vector as $\bm x:=\bm x^{k+1}$ and the fixed vector as $\bm y:=\bm x^{k}$. To bootstrap the fixed point iteration, we first compute an initial guess $\bm x^{(0)}=\phi(t^k,\bm y)$ using any sufficiently accurate explicit time-stepping scheme $\phi:I\times U\rightarrow U$ depending on $\bm{f}$, $t^k$ and $\bm y$, such as explicit Runge-Kutta schemes. From this initial guess or predictor, subsequent iterates $\bm x^{(i)}$ are then improved or corrected using the implicit MN-DMM scheme by iterating the fixed point iteration of \eqref{eq: MN DMM explicit} given by
\begin{equation*}
    \bm{x}^{(i)} := \bm{y} + \tau \bm{f}_{\text{\emph{\scalebox{.7}{MN}}}}^{\tau}(t^k,\bm{x}^{(i-1)},\bm{y})
\end{equation*}
until a desired tolerance $\delta$ is reached. More explicitly, short-handing the notations
\begin{align*}
A(\bm x)&:=\Lambda^\tau(t^k,\bm x, \bm{y}),&&\mbox{(Discrete multiplier matrix)} \\
\bm{s}(\bm x)&:=\bm{f}^{\tau}(t^k,\bm x, \bm{y}), && \mbox{(Discrete source term)}\\
\bm{r}(\bm x)&:=A(\bm x)\bm{s}(\bm x)+\partial_t^\tau \bm{\psi}(t^k,\bm x, \bm{y}), &&\mbox{(Residual of \eqref{eq: discrete correspondence b})}
\end{align*}
and using the absolute error of the conserved quantities as the tolerance criteria, we arrive at the \emph{Direct MN-DMM Algorithm}, or \emph{MN-DMM Algorithm}:

\begin{algorithm}[h!]
	\caption{Direct MN-DMM}\label{alg: MN-DMM}
	\begin{algorithmic}[1]
		\State $\bm{x}^{(0)}\gets \phi(t^k,\bm y)$
		\Repeat $~i=1,2,\dots$
		\State $\bm{x}^{(i)} \gets \bm{y} +\tau\,\left(\bm{s}(\bm x^{(i-1)}) - A^+(\bm{x}^{(i-1)})\,\bm{r}(\bm{x}^{(i-1)})\right)$
	\Until{$\abs{\bm\psi(\bm{x}^{(i)})-\bm\psi(\bm{x}^0)}<\delta$}
		\State \Return $\bm{x}^{(i)}$
	\end{algorithmic}
\end{algorithm}
\vskip -2mm
A Banach fixed point argument shows that \Cref{alg: MN-DMM} converges. 
\begin{theorem} \label{thm:FPIconv}
If for sufficiently small $\tau$, the collection of functions $\{\bm{s}$, $\bm{A}^+$, $\bm{r}\}_{\tau}$ are locally Lipschitz continuous with Lipschitz constants independent of $\tau$, then under the hypotheses of assumptions \ref{assum: nb of conserved quantities} and \ref{assum: consistency}, there exists $\tau_*>0$ such that \Cref{alg: MN-DMM} converges whenever $\tau<\tau_*$.
\end{theorem}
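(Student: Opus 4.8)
The plan is a Banach contraction-mapping argument for the iteration map
$$\Phi(\bm{x}) := \bm{y} + \tau\bigl(\bm{s}(\bm{x}) - A^+(\bm{x})\,\bm{r}(\bm{x})\bigr) = \bm{y} + \tau\,\bm{f}^{\tau}_{\text{\scalebox{.7}{MN}}}(t^k,\bm{x},\bm{y}),$$
so that \Cref{alg: MN-DMM} is exactly the fixed point iteration $\bm{x}^{(i)}=\Phi(\bm{x}^{(i-1)})$ and any fixed point $\bm{x}$ of $\Phi$ solves \eqref{eq: MN DMM explicit}, equivalently $\bm{F}^{\tau}_{\text{\scalebox{.7}{MN}}}(t^k,\bm{x},\bm{y})=\bm 0$. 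First I would fix a compact neighborhood $K\subset U$ of $\bm{y}=\bm{x}^k$. Using \Cref{cor: moore penrose inverse existence} together with the standing hypothesis that $\{\bm{s},A^+,\bm{r}\}_{\tau}$ are locally Lipschitz continuous with Lipschitz constants independent of $\tau$, a routine product estimate (the term $A^+\bm{r}$ being a product of a $\tau$-uniformly bounded Lipschitz matrix field and a $\tau$-uniformly bounded Lipschitz vector field) yields, for $\tau$ below some $\tau_0$, a bound $M$ and a Lipschitz constant $L$ for $\bm{x}\mapsto\bm{f}^{\tau}_{\text{\scalebox{.7}{MN}}}(t^k,\bm{x},\bm{y})$ on $K$, both independent of $\tau$.

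Next I would select the ball on which to apply Banach's theorem. Pick $\rho>0$ with $\overline{B}(\bm{y},2\rho)\subset K$. Since $\phi$ is a consistent explicit one-step scheme, $\|\bm{x}^{(0)}-\bm{y}\|=\|\phi(t^k,\bm{y})-\bm{y}\|\le C_{\phi}\,\tau$, so for $\tau$ small $\bm{x}^{(0)}\in\overline{B}(\bm{y},\rho)$ and hence $B:=\overline{B}(\bm{x}^{(0)},\rho)\subset\overline{B}(\bm{y},2\rho)\subset K$. On $B$ I would verify the two Banach hypotheses. \emph{Contraction}: $\|\Phi(\bm{x}_1)-\Phi(\bm{x}_2)\|=\tau\|\bm{f}^{\tau}_{\text{\scalebox{.7}{MN}}}(t^k,\bm{x}_1,\bm{y})-\bm{f}^{\tau}_{\text{\scalebox{.7}{MN}}}(t^k,\bm{x}_2,\bm{y})\|\le \tau L\,\|\bm{x}_1-\bm{x}_2\|$, which is a contraction once $\tau L<1$. \emph{Self-map}: for $\bm{x}\in B$,
$$\|\Phi(\bm{x})-\bm{x}^{(0)}\|\le\|\Phi(\bm{x})-\Phi(\bm{x}^{(0)})\|+\|\Phi(\bm{x}^{(0)})-\bm{x}^{(0)}\|\le \tau L\rho+\bigl(\|\bm{y}-\bm{x}^{(0)}\|+\tau\|\bm{f}^{\tau}_{\text{\scalebox{.7}{MN}}}(t^k,\bm{x}^{(0)},\bm{y})\|\bigr)\le \tau L\rho+(C_{\phi}+M)\,\tau,$$
which is $\le\rho$ as soon as $\tau\le \rho/(L\rho+C_{\phi}+M)$.

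Taking $\tau_{*}$ to be the minimum of $\tau_0$, of the threshold forcing $\bm{x}^{(0)}\in\overline{B}(\bm{y},\rho)$, of $1/(2L)$, and of $\rho/(L\rho+C_{\phi}+M)$, for every $\tau<\tau_{*}$ the map $\Phi$ is a contraction (with ratio at most $\tfrac12$) of the complete metric space $B$ into itself. Banach's fixed point theorem then gives a unique fixed point $\bm{x}^{*}\in B$ with $\bm{x}^{(i)}\to\bm{x}^{*}$ geometrically, and $\bm{x}^{*}$ solves the one-step MN-DMM scheme \eqref{FMN r-step method}. To close the loop on termination, one notes that such an $\bm{x}^{*}$ is conservative in $\bm{\psi}^{\tau}$ by \Cref{thm: A-DMM}, so that $\bm{\psi}(\bm{x}^{*})=\bm{\psi}(\bm{x}^{0})$; continuity of $\bm{\psi}$ together with $\bm{x}^{(i)}\to\bm{x}^{*}$ then forces the stopping test $|\bm{\psi}(\bm{x}^{(i)})-\bm{\psi}(\bm{x}^{0})|<\delta$ to hold for all sufficiently large $i$, so the algorithm halts.

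The main obstacle I anticipate is the self-map step, and in particular the bookkeeping of $\tau$-uniformity: one must be sure that $M$, $L$ and $C_{\phi}$ do not covertly depend on $\tau$ — this is precisely the role of \Cref{cor: moore penrose inverse existence} and of the $\tau$-independent Lipschitz hypothesis — and then exploit that both the predictor displacement $\|\bm{x}^{(0)}-\bm{y}\|$ and the per-step update $\tau\,\bm{f}^{\tau}_{\text{\scalebox{.7}{MN}}}$ are $\mathcal{O}(\tau)$, so that shrinking $\tau$ confines the whole orbit $\{\bm{x}^{(i)}\}$ to the fixed neighborhood $K$ on which those constants are valid. The contraction inequality itself is then immediate from the $\tau$-uniform Lipschitz bound, and the remainder is the textbook Banach argument.
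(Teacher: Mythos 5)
Your proposal is correct and follows essentially the same route as the paper: both set up the iteration map $\bm{G}^{\tau}(\bm{z})=\bm{y}+\tau\bigl(\bm{s}(\bm{z})-A^{+}(\bm{z})\bm{r}(\bm{z})\bigr)$, use the $\tau$-uniform bound from \Cref{lem: existence and boundedness of right-inverse}/\Cref{cor: moore penrose inverse existence} plus the $\tau$-independent Lipschitz hypothesis to get a uniform bound $M$ and Lipschitz constant $L$ on a fixed ball near $\bm{y}$, obtain the self-map property from the $\mathcal{O}(\tau)$ size of the update, the contraction from $\tau L<1$, and conclude via Banach with $\tau_{*}$ a minimum of the thresholds. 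Your treatment is in fact slightly more careful than the paper's on two minor points (placing the initial guess $\bm{x}^{(0)}=\phi(t^k,\bm{y})$ inside the ball via consistency of the predictor, and checking that the stopping test eventually triggers), but the core argument is identical.
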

\begin{proof}
Denote $\bm{G}^{\tau}(\bm{z}) := \bm{y} + \tau\,\bm{F}^\tau(\bm{z})$ and $\bm{F}^{\tau}(\bm{z}):=\bm{s}(\bm z) - A^+(\bm{z})\,\bm{r}(\bm{z})$. Then the above algorithm is equivalent to the fixed point iteration 
$$
\bm{x}^{(i+1)} = \bm{G}^{\tau}(\bm{x}^{(i)}),\qquad\bm{x}^{(0)}:=\phi(t^k,\bm{y}).
$$

By continuity, it follows by \Cref{lem: existence and boundedness of right-inverse} and consistency of $\bm{s}$ and $\bm{r}$ to their continuous counterpart that there exists $\tau_0>0$ and an open ball $\mathcal{B}\subset\mathbb{R}^n$ of radius $\epsilon>0$ centered at $\bm{y}$ over which the restrictions of the discrete functions are Lipschitz continuous and $M:= \sup_{\tau<\tau_0}\sup_{\bm{z}\in \mathcal{B}}\,\|\bm{F}^{\tau}(\bm{z})\| <\infty
$. In particular, $\bm{G}^{\tau}(\mathcal{B})\subset\mathcal{B}$ for $\tau < \min\{\tau_0,\epsilon/M\}$. In fact, the Lipschitz continuity hypothesis guarantees that
\begin{equation}
    \|\bm{G}^{\tau}(\bm{z}_1)-\bm{G}^{\tau}(\bm{z}_2)\| = \tau \| \bm{F}^{\tau}(\bm{z}_1)-\bm{F}^{\tau}(\bm{z}_2) \| \leq \tau\,L \|\bm{z}_1-\bm{z}_1\|
\end{equation}
where $L>0$ is the Lipschitz constant of $\bm{F}^{\tau}$ over $\mathcal{B}$. We conclude that $\bm{G}^{\tau}:\mathcal{B}\rightarrow \mathcal{B}$ is a contraction for $\tau<\tau_*:=\min\{\tau_0,\epsilon/M,1/L\}$, and thus \Cref{alg: MN-DMM} converges by the Banach fixed point theorem.
\end{proof}
\vskip -2mm
The main drawback of the Direct MN-DMM Algorithm is the need to compute analytically an inverse matrix within the pseudoinverse of $A(\bm x)$. This can be alleviated by introducing auxiliary variables, as we discuss next.
\vskip -2mm
\subsection{Mixed MN-DMM}	\label{sec: Mixed MN-DMM} 
In order to solve \eqref{eq: MN DMM explicit} without having to invert $\Lambda^{\tau}\left(\Lambda^{\tau}\right)^{\top}$ explicitly for the computation of $\bm{f}^{\tau}_{\text{\emph{\scalebox{.7}{MN}}}}$, one option is to consider the mixed formulation of \ref{eq: MN DMM explicit},
\begin{subequations}
\begin{align}
D_t^\tau \bm{x} + {\Lambda^\tau}^{\top}\bm{g} &= \bm{f}^\tau, \label{eq: mixed1}\\ 
\Lambda^\tau{\Lambda^\tau}^{\top} \bm{g} &= {\Lambda^\tau}\bm{f}^\tau+\partial_t^\tau \bm{\psi}, \label{eq: mixed2}
\end{align}
\end{subequations}
where the matrix inversion is replaced with solving the linear system \eqref{eq: mixed2}.  Denoting
$
B(\bm x):=A(\bm x)A(\bm x)^{\top}
$, equations \eqref{eq: mixed1} and \eqref{eq: mixed2} are equivalent to 
\begin{subequations}
\begin{align}
\bm x &= \bm{y} + \tau(\bm{s}(\bm x)-A(\bm x)^{\top}\bm g), \label{eq: simp mixed1}\\ 
B(\bm x)\bm{g} &= \bm{r}(\bm x). \label{eq: simp mixed2}
\end{align}
\end{subequations}

To solve \eqref{eq: simp mixed1} and \eqref{eq: simp mixed2}, we again propose a fixed point iteration type algorithm, which we referred to as the \emph{Mixed MN-DMM Algorithm}:

\begin{algorithm}
	\caption{Mixed MN-DMM}\label{alg: Mixed A-DMM}
	\begin{algorithmic}[1]
		\State $\bm{x}^{(0)}\gets \phi(t^k,\bm y)$
		\Repeat $~i=1,2,\dots$
		\State $\bm{g} \gets \text{Solve}\left(B(\bm x^{(i-1)})\bm{g} = \bm{r}(\bm x^{(i-1)}) \right)$
		\State $\bm{x}^{(i)} \gets \bm{y} +\tau(\bm{s}(\bm x^{(i-1)})-A(\bm x^{(i-1)})^{\top}\bm g)$
		\Until{$\abs{\bm\psi(\bm x^{(i)})-\bm\psi(\bm{x}^0)}<\delta$}
		\State \Return $\bm x^{(i)}$
	\end{algorithmic}
\end{algorithm}
\vskip -2mm
Notice that for any accurate enough initial guess $\bm{x}^{(0)}$, standard arguments for perturbation of matrices that we have previously used in \Cref{lem: existence and boundedness of right-inverse} guarantees that $B$ will be invertible for sufficiently small $\tau$. 
In other words, \Cref{alg: Mixed A-DMM} is iteratively solving normal equations of the second kind 
\[
B(\bm{x}) \bm{g} = \bm{r},\qquad\qquad A^{\top}\bm{g} = \bm{f}^{\tau}-\bm{f}_{\text{\scalebox{.7}{MN}}}^{\tau},
\]
associated with the \emph{under}determined minimization problem \eqref{eq: underdetermined minimal norm}
, see for example \cite[Eq. 1.1.20]{Bjorck96}. Moreover, in line 4 of \Cref{alg: Mixed A-DMM}, we have the freedom to choose any state of the art linear solver for this type of equation. However, it is well-known that forming $B(\bm x)=A(\bm x)A(\bm x)^{\top}$ explicitly may lead to loss of accuracy and large condition numbers. Taking this possibility into account, we propose next using matrix decomposition techniques that are better suited to tackle such instances.

\subsection{Mixed MN-DMM using Singular Value Decomposition} \label{sec: SVD MN-DMM}

As discussed, the matrix $B$ can be ill-conditioned in practice, and we will see this in some numerical examples of \Cref{sec: Numerical results}. Appealing to the Singular Value Decomposition (SVD) \cite{trefethenBau97},
\[
A = U\Sigma V^{\top}
\]
can alleviated this issue\footnote{Indeed, QR decomposition is another possibility as well.}, though at additional costs of computing such decomposition. Recall here that $U\in M_{m\times m}(\mathbb{R})$ and $V\in M_{n\times n}(\mathbb{R})$ are orthogonal matrices and the non-zero block of
$\Sigma = \begin{pmatrix}
\Sigma_m & 0_{m\times (n-m)}
\end{pmatrix}
$
is the diagonal matrix $\Sigma_m := \text{diag}(\sigma_1,...,\sigma_m)$, where $\sigma_1\geq ...\geq \sigma_m$ are the real eigenvalues of $B$. Thus, the multiplication of the right Moore-Penrose inverse on  $\bm{f}_{\text{\emph{\scalebox{.7}{MN}}}}^{\tau}$ can be computed using 
\[
A^+ = V \Sigma^{+}
U^{\top},\qquad\Sigma^{+}:=\begin{pmatrix}
\Sigma_m^{-1}\\
0_{(n-m)\times m}
\end{pmatrix},
\] which can be done in a sequential manner involving only matrix--vector products. We refer this approach as the \emph{Mixed MN-DMM Algorithm using SVD}: 

\begin{algorithm}
	\caption{Mixed MN-DMM using SVD}\label{alg: SVD MN-DMM}
	\begin{algorithmic}[1]
		\State $\bm{x}^{(0)}\gets \phi(t^k,\bm y)$
		\Repeat $~i=1,2,\dots$
		\State $[U,\Sigma, V] \gets \text{SVD}(A(\bm{x}^{(i-1)}))$
		\State $\bm{a} \gets U^{\top}\bm{r}(\bm{x}^{(i-1)})$
		\State $\bm{b} \gets \Sigma^{+}\bm{a}$
		\State $\bm{x}^{(i)} \gets \bm{y} +\tau\,\left(\bm{s}(\bm{x}^{(i-1)}) - V\bm{b}\right)$
	\Until{$\abs{\bm\psi(\bm{x}^{(i)})-\bm\psi(\bm{x}^0)}<\delta$}
		\State \Return $\bm{x}^{(i)}$
	\end{algorithmic}
\end{algorithm}
\vskip -2mm
The main advantage of this approach is that the product $B(\bm x)=A(\bm x)A(\bm x)^{\top}$ does not need to be assembled at each iteration, thus potentially improving the accuracy of the solution for poorly conditioned problems. However, the main drawback is that computing the SVD decomposition of $A(\bm x)\in M_{m\times n}(\mathbb{R})$ at each iteration requires additional costs. Nevertheless, \Cref{alg: SVD MN-DMM} can yield more accurate numerical solutions when $A$ is poorly conditioned, which opens the possibility to future improvements along this direction.

\section{Numerical results}\label{sec: Numerical results}

With the theoretical results now established and practical implementation discussed, we now present several numerical examples to illustrate the MN-DMM approach and its two variants. The examples were chosen from a wide variety of physical problems, such as biological systems, chaotic systems, classical mechanics, fluid dynamics and geodesic flows. Moreover, they are roughly ordered at increasing difficulty in deriving analytic conservative schemes using the original DMM approach. In contrast, the MN-DMM approach only requires knowledge of the divided difference expressions within the discrete multiplier matrix to construct the conservative schemes, which can readily be systematized using modern computer algebra packages.

For the following examples, we have chosen to compare the MN-DMM method with two traditional methods, namely the standard 4th-order Runge-Kutta method and the 2nd-order symplectic Implicit Midpoint method. While these choices do not form an exhaustive comparison, they do highlight the large difference at preserving multiple conserved quantities across a wide variety of examples.

For the implicit schemes, such as Implicit Midpoint method and MN-DMM schemes, we have used the improved Euler's method to obtain an initial guess for the fixed point iteration employed to solve the nonlinear or implicitly defined equations. For the choice of $\bm{f}^\tau$ for the MN-DMM in these tests, the improved Euler method was also chosen. For the sake of reproducibility, we have listed within each example their relevant problem parameters, time step size $\tau$, final time $T$, error tolerance for conserved quantities $\delta$, error tolerance for residual of the nonlinear equations $\epsilon$, and maximum number of fixed point iterations used per time step $K$. In all subsequent tables, we compare their maximum error in conserved quantities, as well as the mean fixed point iterations (FPIs) used for the implicit methods. Moreover, we compare the largest condition number of $\kappa(B)$ or $\kappa(A)$ encountered during simulation, for the Mixed MN-DMM and Mixed MN-DMM using SVD respectively. Note that for systems with a single conserved quantity, $\kappa(B)=1=\kappa(A)$, since $\Lambda^\tau {(\Lambda^\tau)}^{\top}$ is a scalar quantity as discussed in \Cref{sec: analytic m=1}.
\vskip -2mm
\subsection{Lotka-Volterra systems}
\label{sec: LV system}
As a first simple example, we illustrate MN-DMM for the two and three species Lotka-Volterra system, with one and two conserved quantities respectively. We first recall their definitions and conserved quantities.

In \cite[Example 5.2.1]{wan2017conservative}, analytic DMM schemes were derived for the two-species Lotka-Volterra system given by
\begin{align}
\bm F(\bm x, \dot{\bm x}) :=
\begin{pmatrix}
\dot{x} - x(a - b y) \\
\dot{y} - y(d x-c)
\end{pmatrix}, \label{eq:LV2}
\end{align}
for positive constants $a, b, c, d$. It is well-known that this system has a conserved quantity of the form
\begin{align}
\psi(\bm x) := a \log y-b y+c \log x-d x. \label{eq:LV2CQ}
\end{align}

Using $\tau = 0.1, T=10000, \delta = 1\times 10^{-15}, \epsilon = 1\times 10^{-15}, K=20$ and initial conditions $\bm x^0 = (0.3,0.7)^{\top}$ with $(a,b,c,d)=(1,2,3,4)$, we obtain the results showed in \Cref{tab:LV2}, which confirms the machine precision accuracy of the MN-DMM at preserving the conserved quantity $\psi$ of the two species Lotka-Volterra system. We also note that both the Implicit Midpoint method and MN-DMM methods utilized a similar number of fixed point iterations, with about $11\sim 12$ mean FPIs.

\begin{table}[h!]
\centering
\vskip -0.2cm
\begin{tabular}{|p{3.8cm}|c|c|c|}
\hline
\centering Numerical Method & $\scriptstyle\norm{\psi-\psi^0}_\infty$ & Mean FPIs & $\scriptstyle \norm{\kappa(\cdot)}_\infty$ \\
\hline
\hline
\centering RK4 & $1.279\times 10^{-1}$ & -- & -- \\
\hline
\centering Implicit Midpoint & $1.825\times 10^{-1}$ & 12.069 & --\\
\hline
\centering MN-DMM & $3.553\times 10^{-15}$ & 11.649 & --\\
\hline
\centering Mixed MN-DMM & $4.441\times 10^{-15}$ & 11.678 & 1.000\\
\hline
\centering Mixed MN-DMM (SVD) & $3.553\times 10^{-15}$ & 11.666 & 1.000\\
\hline
\end{tabular}
\caption{Two-species Lotka-Volterra system with\\ $\psi(x,y) = x-\log x+y-2 \log y$.}
\label{tab:LV2}
\end{table}
\begin{figure}[!ht]
\centering
\begin{subfigure}[b]{\textwidth}
  \centering
  \includegraphics[width=0.85\linewidth]{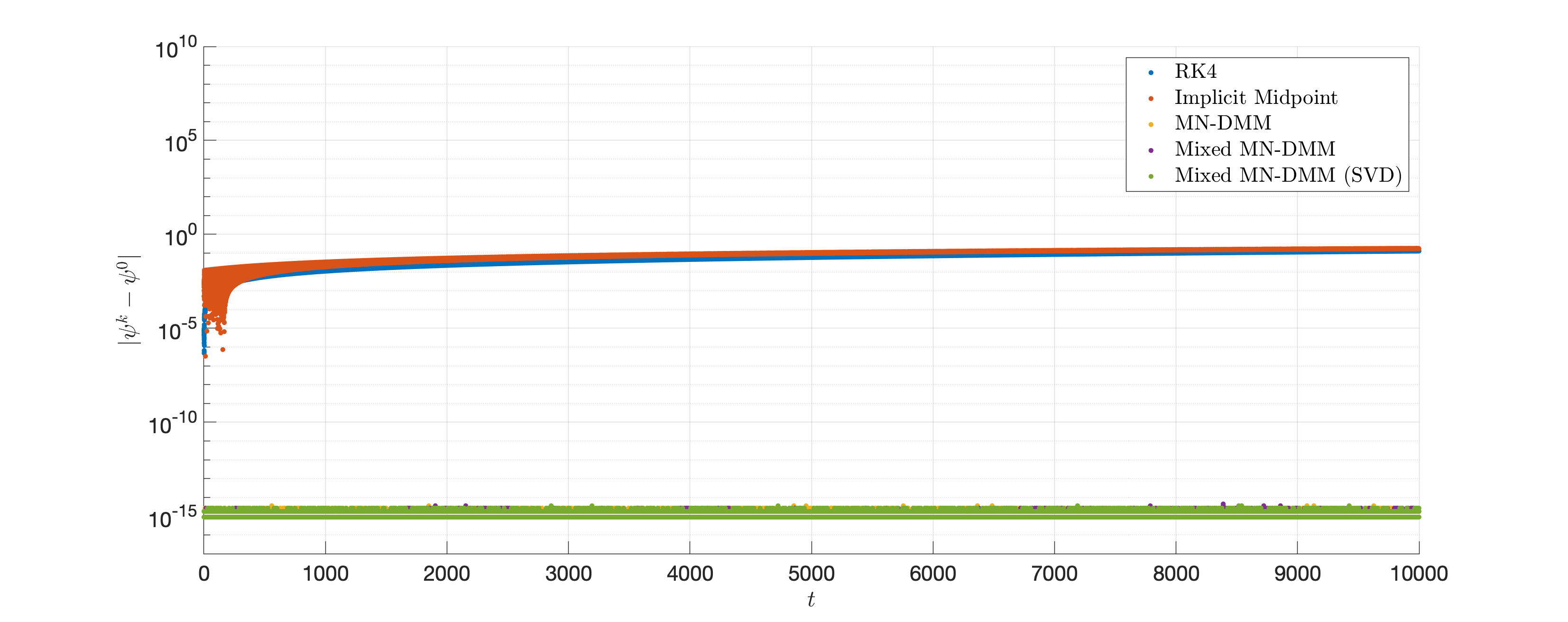}
\end{subfigure}
\hfil
\begin{subfigure}[b]{\textwidth}
  \centering
  \includegraphics[width=0.85\linewidth]{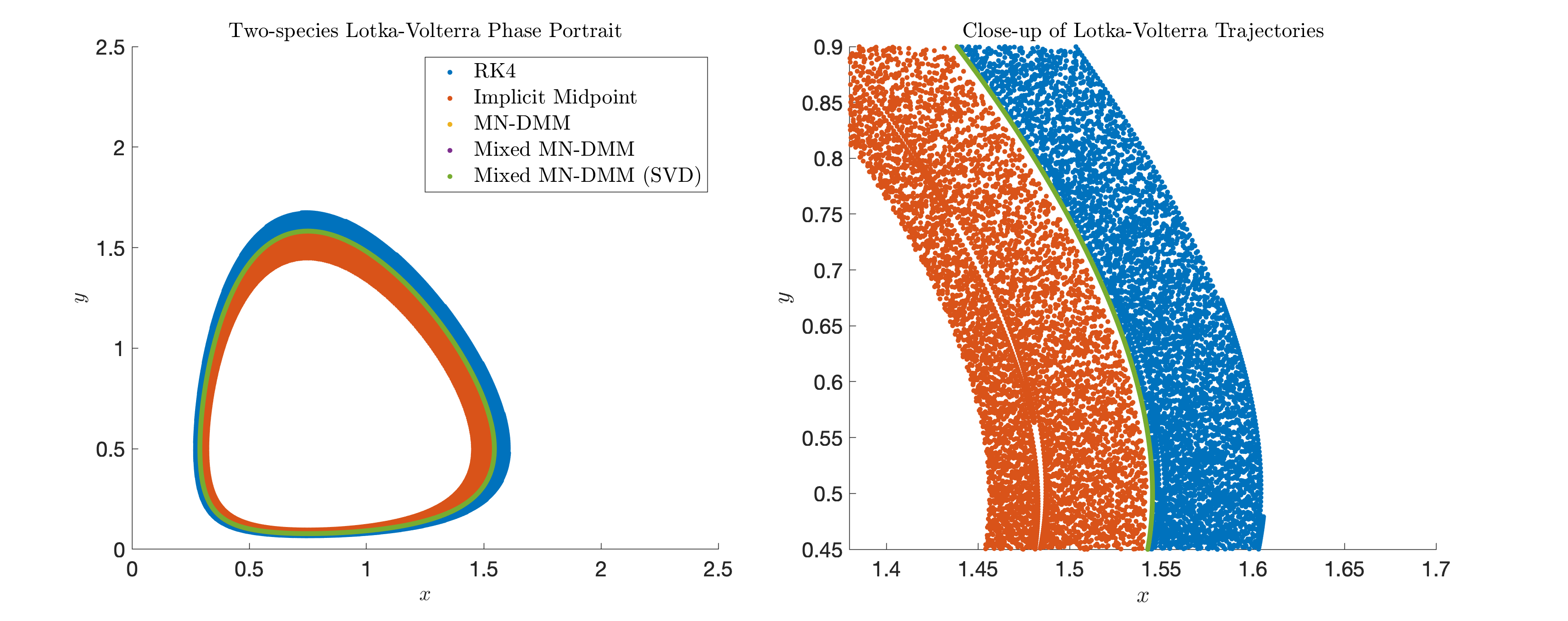}
\end{subfigure}
        \caption{Comparison of error in $\psi(\bm x)$ and trajectories for the two-species Lotka-Volterra problem.}
        \label{fig:LV2D}
\end{figure}
Moreover, \Cref{fig:LV2D} shows the trajectories of the Implicit Midpoint method and RK4 method drifting away from the level set of $\psi$. In contrast, the MN-DMM results show machine precision accuracy at remaining on the level set of $\psi$.

Extending to three-species, the Lotka-Volterra system takes the general form
\begin{align}
\bm F(\bm x, \dot{\bm x}) :=
\begin{pmatrix}
\dot{x} - x(a_{11}(x-\xi_1)+a_{12}(y-\xi_2)+a_{13}(z-\xi_3)) \\
\dot{y} - x(a_{21}(x-\xi_1)+a_{22}(y-\xi_2)+a_{23}(z-\xi_3))\\
\dot{z} - x(a_{31}(x-\xi_1)+a_{32}(y-\xi_2)+a_{33}(z-\xi_3))
\end{pmatrix}, \label{eq:LV3}
\end{align}
where $A=[a_{ij}]$ is a real-valued interaction matrix and $\bm \xi=(\xi_1,\xi_2,\xi_3)^{\top}$ is a fixed point of the system.
\cite{schi03a} showed that there are two conserved quantities 
\begin{align}
\bm \psi(\bm x) := 
\begin{pmatrix}
d_1(x-\xi_1\log x)+d_2(y-\xi_2\log y)+d_3(z-\xi_3\log z) \\
x^{\eta_1} y^{\eta_2} z^{\eta_3}
\end{pmatrix}, \label{eq:LV3CQ}
\end{align}
 if the diagonal matrix $D:=\text{diag}(d_1,d_2,d_3)$ and vector $\bm \eta:=(\eta_1,\eta_2,\eta_3)^{\top}$ satisfies
\begin{subequations}
\begin{align}
DA+A^{\top}D = 0, &\qquad \bm \eta^{\top} A = \bm 0.\label{eq:LV3-cond1}
\end{align}
\end{subequations}
In \cite[Example 5.2.2]{wan2017conservative}, analytic DMM schemes were derived for a special three-species system with a specific $A, D, \bm \xi, \bm \eta$.
Here we compare results using MN-DMM for the following example satisfying \eqref{eq:LV3-cond1},
$$
A= \begin{pmatrix}
0 &3 &-2\\
-3 &0 &1 \\
2 &-1 & 0
\end{pmatrix}, 
~~~\bm \xi = \begin{pmatrix}
1 \\
1 \\
1
\end{pmatrix}, 
~~~D = \text{diag}(1,1,1), ~~~\bm \eta = \begin{pmatrix}
1 \\
2 \\
3
\end{pmatrix}.
$$
Using $\tau = 0.05, T=30000, \delta = 1\times 10^{-15}, \epsilon = 1\times 10^{-15}, K=20$ and initial conditions $\bm x^0 = (0.2, 0.5, 0.3)^{\top}$, we obtain the result listed in \Cref{tab:LV3}.

\begin{table}[h!]
\centering
\vskip -0.2cm
\resizebox{\columnwidth}{!}{
\begin{tabular}{|p{3.8cm}|c|c|c|c|}
\hline
\centering Numerical Method & $\scriptstyle\norm{\psi_1-\psi_1^0}_\infty$ &
$\scriptstyle\norm{\psi_2-\psi_2^0}_\infty$ & Mean FPIs & $\scriptstyle \norm{\kappa(\cdot)}_\infty$ \\
\hline
\hline
\centering RK4 & $3.893\times 10^{-2}$ & $1.478\times 10^{-4}$ & -- & --\\
\hline
\centering Implicit Midpoint & $3.701\times 10^{0}$& $1.350\times 10^{-3}$ & 8.957 & -- \\
\hline
\centering MN-DMM & $3.553\times 10^{-15}$ & $1.003\times 10^{-15}$ & 12.205 & -- \\
\hline
\centering Mixed MN-DMM & $3.553\times 10^{-15}$ & $1.003\times 10^{-15}$ & 12.249 & $2.243\times 10^{6}$ \\
\hline
\centering Mixed MN-DMM (SVD)& $2.665\times 10^{-15}$ & $1.003\times 10^{-15}$ & 12.216 & $1.309 \times 10^{3}$\\
\hline
\end{tabular}
}
\caption{Three-species Lotka-Volterra system with \\$\bm \psi(\bm x) = \begin{pmatrix}x-\log x+y-2 \log y+z-3\log z \\ x y^2 z^3\end{pmatrix}.$}
\label{tab:LV3}
\end{table}
\begin{figure}[!ht]
\centering
\vskip -9mm
\begin{subfigure}[b]{\textwidth}
  \centering
  \includegraphics[width=0.9\linewidth]{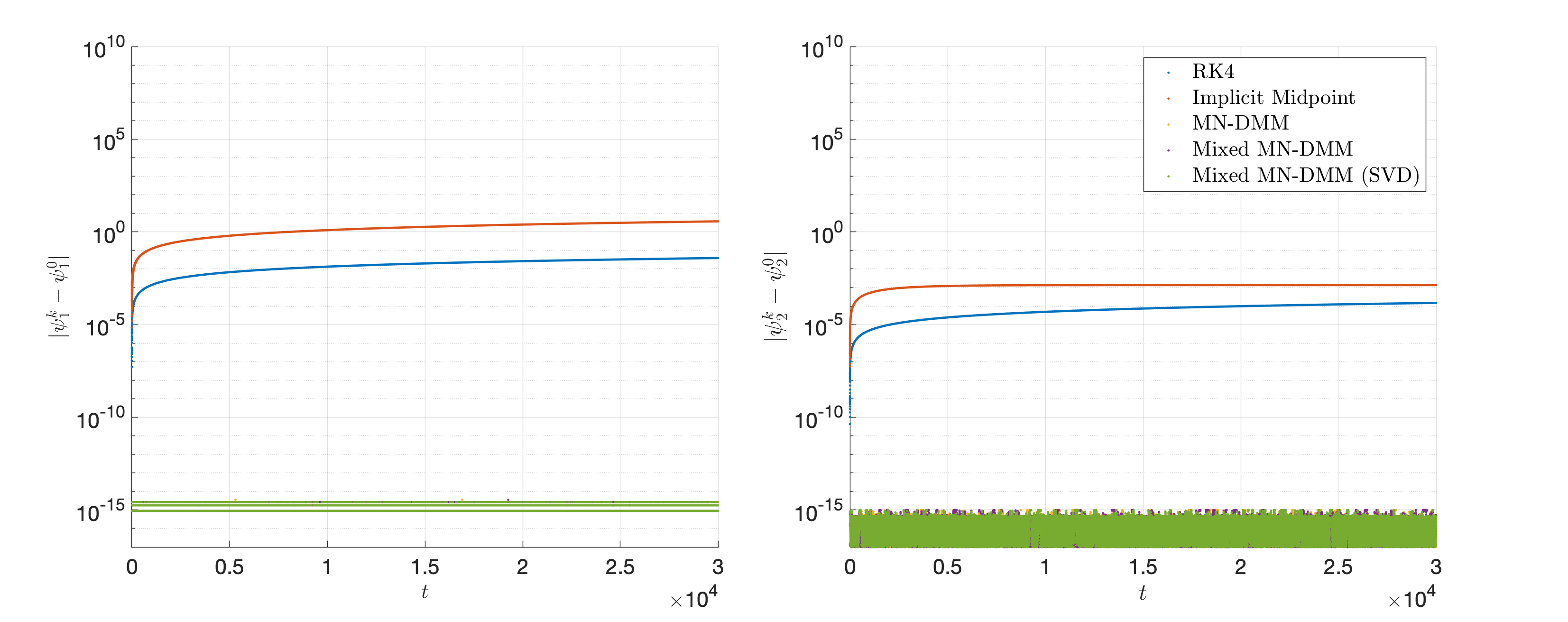}
\end{subfigure}
\hfil
\begin{subfigure}[b]{\textwidth}
  \centering
  \includegraphics[width=0.85\linewidth]{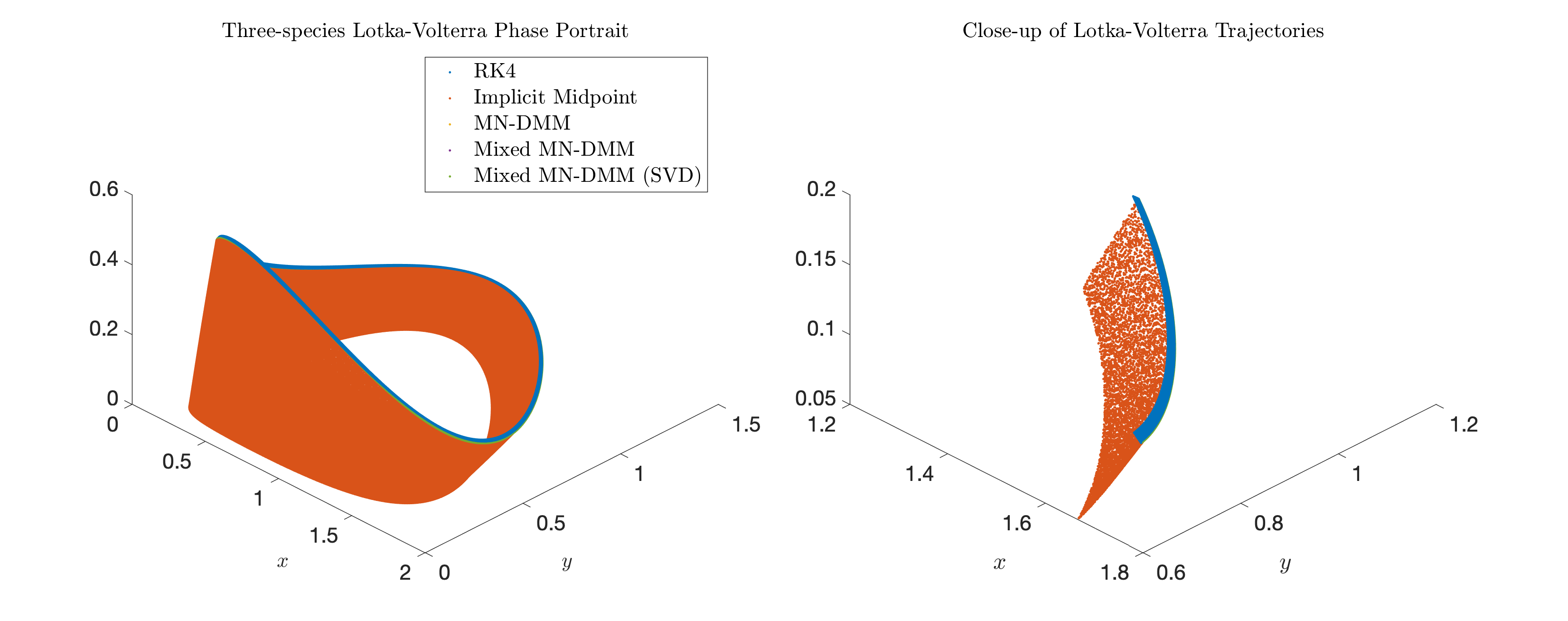}
\end{subfigure}
        \caption{Comparison of error in $\bm \psi(\bm x)$ and trajectories for the three-species Lotka-Volterra problem.}
        \label{fig:LV3D}
\end{figure}

Similar to the two-species case, \Cref{tab:LV3} shows machine precision accuracy at preserving the two conserved quantities $\bm \psi(\bm x)$ for the MN-DMM results. While MN-DMM did required a mean FPI of $\sim12$ over the Implicit Midpoint method 's mean FPI of $\sim9$, the MN-DMM results are the only methods not exhibiting large deviation of the level sets of $\bm \psi(\bm x)$ as shown in \Cref{fig:LV3D}, in contrast to the Implicit Midpoint and RK4 method. Moreover, as this example involves more than one conserved quantity, \Cref{tab:LV3} now shows a smaller condition number for the associated linear system when SVD is used in the Mixed MN-DMM approach.
\vskip -20mm
\subsection{Planar restricted three-body problem}
\label{sec: res3body}
$ $\\
In \cite[Example 5.3]{wan2017conservative}, the planar restricted 3-body problem involving the Arenstorf orbit parameters was considered and an analytic DMM scheme was derived, albeit with much effort using divided difference calculus. Here we consider the same example but with much less effort to derive the conservative scheme using the MN-DMM approach.

For completeness, we first briefly recall the planar restricted three-body problem, which describes the gravitational motion of three bodies in a plane with a negligible mass, such as the Earth--Moon--Satellite system. The equations of motions are
\begin{equation}
\boldsymbol F(\bm x,\dot{\bm x}) 
:=
\begin{pmatrix}\dot{x_1} - y_1 \\
\dot{x_2} -y_2\\
\dot{y_1} -\left(x_1+2y_2-\dfrac{\alpha(x_1-\beta)}{((x_1-\beta)^2+x_2^2)^{\frac{3}{2}}}-\dfrac{\beta(x_1+\alpha)}{((x_1+\alpha)^2+x_2^2)^{\frac{3}{2}}}\right)\\
\dot{y_2} -\left(x_2-2y_1-\dfrac{\alpha x_2}{((x_1-\beta)^2+x_2^2)^{\frac{3}{2}}}-\dfrac{\beta x_2}{((x_1+\alpha)^2+x_2^2)^{\frac{3}{2}}}\right)\end{pmatrix}, \label{3bodySys}
\end{equation}
where $\bm x = (x_1,x_2,y_1,y_2)$ are the relative positions and momenta of the satellite to the center of mass between the Earth and Moon, with $\alpha, \beta$ being relative masses of the two bodies satisfying $\alpha+\beta=1$. It is well-known that \eqref{3bodySys} has a conserved quantity called the Jacobi integral $J$ given by,
\[
 J(\boldsymbol x)=\dfrac{x_1^2 + x_2^2- y_1^2-y_2^2}{2} + \dfrac{\alpha}{((x_1-\beta)^2+x_2^2)^{\frac{1}{2}}}+\dfrac{\beta}{((x_1+\alpha)^2+x_2^2)^{\frac{1}{2}}}.
\]
We consider the Arenstorf orbit period $P=17.0652165601579625588917206249$ and parameter $\alpha=0.012277471$ were used with initial conditions,
$$\bm x^0 = (0.994, 0, 0, -2.00158510637908252240537862224)^{\top}.$$ Using the solver parameters $T=P\times 1.015, \tau = T\times 10^{-6}, \delta = 1\times 10^{-15}, \epsilon = 1\times 10^{-15}, K=20$, we obtained the error in the Jacobi integral in \Cref{tab:res3body}.
\begin{table}[h!]
\centering
\vskip -0.2cm
\begin{tabular}{|p{3.8cm}|c|c|c|}
\hline
\centering Numerical Method & $\scriptstyle\norm{J-J^0}_\infty$ & Mean FPIs & $\scriptstyle \norm{\kappa(\cdot)}_\infty$ \\
\hline
\hline
\centering RK4 & $5.793\times 10^{-8}$ & -- & --\\
\hline
\centering Implicit Midpoint & $1.921\times 10^{2}$ & 2.468 & -- \\
\hline
\centering MN-DMM & $6.639\times 10^{-14}$ & 17.310 & --\\
\hline
\centering Mixed MN-DMM & $6.639\times 10^{-14}$ & 17.310 & 1.000\\
\hline
\centering Mixed MN-DMM (SVD)& $6.639\times 10^{-14}$ & 17.310 & 1.000\\
\hline
\end{tabular}
\caption{Planar restricted three-body problem with \\conserved quantity $J(\bm x)$}
\label{tab:res3body}
\end{table}

\begin{figure}[!ht]
\centering
\begin{subfigure}[b]{\textwidth}
  \centering
  \includegraphics[width=0.95\linewidth]{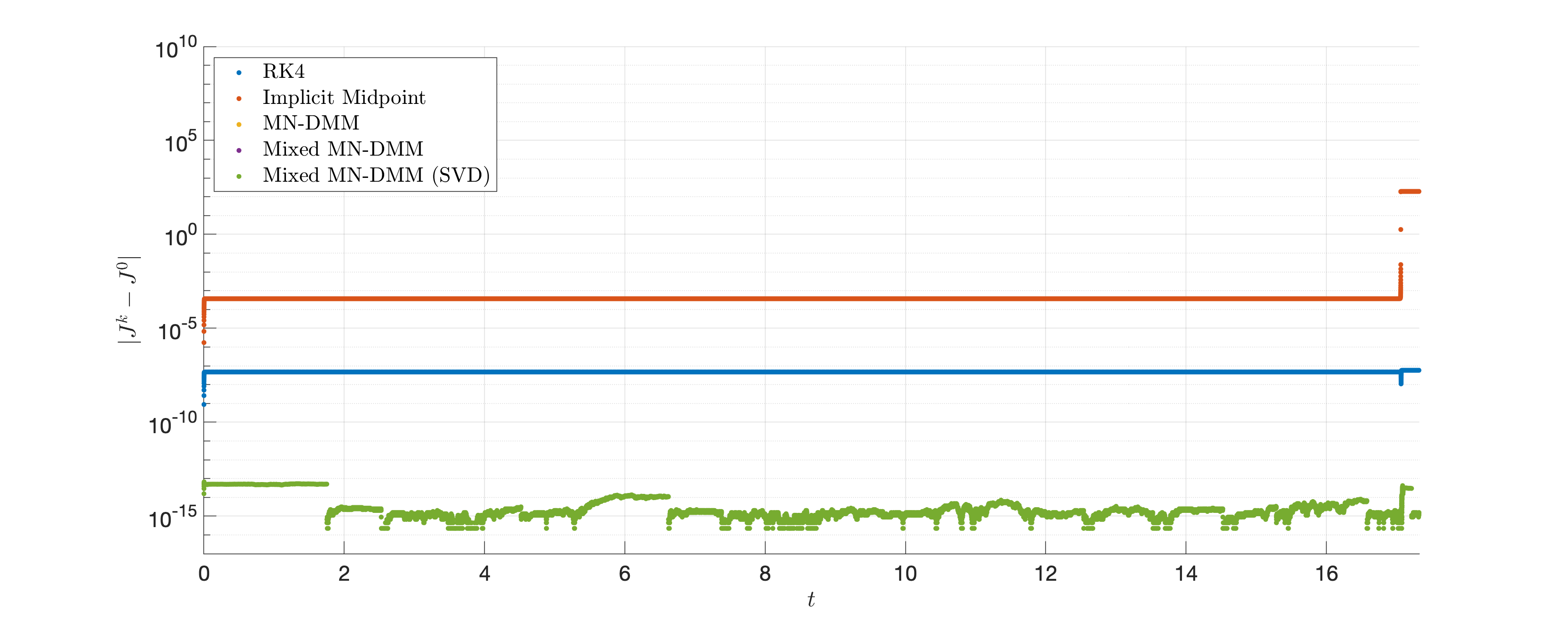}
\end{subfigure}
\hfil
\begin{subfigure}[b]{\textwidth}
  \centering
  \includegraphics[width=0.85\linewidth]{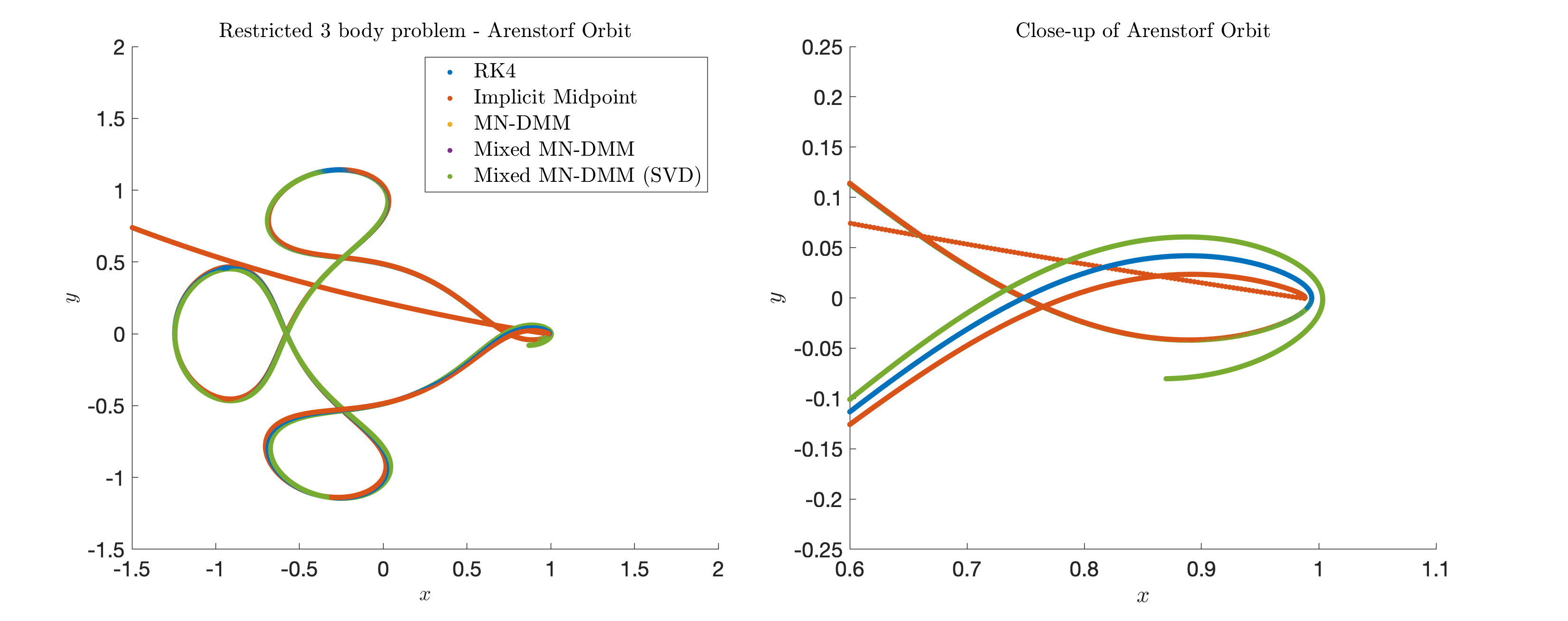}
\end{subfigure}
        \caption{Comparison of error in Jacobi integral and trajectories for the Arenstorf orbit.}
        \label{fig:arenstorf}
\end{figure}
As \Cref{fig:arenstorf} illustrates, all methods were able to reproduce the Arenstorf orbit qualitatively over one period $P$. However, shortly after one period, the Implicit Midpoint method results in a nonphysical trajectory, with several orders of magnitude jump in the error of the Jacobi integral due to the nonconvergence of its fixed point iterations. While the results from the MN-DMM approach do not show an exact periodic orbit, their trajectories beyond one period are close to that of the RK4 method, which is expected due to its higher order accuracy than the presented MN-DMM methods.

\subsection{Lorenz system}
\label{sec: Lorenz system}
In \cite{wan2017conservative}, analytic DMM scheme was also derived for time-dependent conserved quantities for dissipative systems, such as the damped harmonic oscillator. As another interesting example with time-dependent conserved quantities, we consider the Lorenz system for $\bm x = (x,y,z)$, 
\begin{align*}
\bm F(\bm x, \dot{\bm x}) :=
\begin{pmatrix}
\dot{x} - \sigma(y-x) \\
\dot{y} - x(\rho-z)-y\\
\dot{z} - xy-\beta z
\end{pmatrix}
\end{align*} which has six conserved quantities over different sets of positive parameters $\sigma, \rho, \beta$ in nonchaotic regime \cite{AblowitzSegur81,kus83}.
Specifically, for the parameters $\sigma=1/3, \rho=400$ and $\beta=0$, \cite{kus83} showed that there exists a conserved quantity of the form,
$$
\psi(t,\bm x)= \left(x^4-\frac{4}{3}x^2z-\frac{4}{9}y^2-\frac{8}{9}xy+\frac{1600}{3}x^2\right)e^{4t/3}.
$$

Using $\tau = 0.001, T=5, \delta = 1\times 10^{-15}, \epsilon = 1\times 10^{-15}, K=20$ and initial conditions $\bm x^0 = (0.1, 0, 0)^{\top}$, we obtain the error in $\psi(t,\bm x)$. 

\begin{table}[h!]
\centering
\begin{tabular}{|p{3.8cm}|c|c|c|}
\hline
\centering Numerical Method & $\scriptstyle\norm{\psi-\psi^0}_\infty$ & Mean FPIs & $\scriptstyle \norm{\kappa(\cdot)}_\infty$ \\
\hline
\hline
\centering RK4 & $2.916\times 10^{-3}$ & -- & -- \\
\hline
\centering Implicit Midpoint & $7.971\times 10^{1}$ & 18.601 & -- \\
\hline
\centering MN-DMM & $4.425\times 10^{-8}$ & 19.990 & -- \\
\hline
\centering Mixed MN-DMM & $4.425\times 10^{-8}$ & 19.990 & 1.000\\
\hline
\centering Mixed MN-DMM (SVD)& $4.425\times 10^{-8}$ & 19.990 & 1.000\\
\hline
\end{tabular}
\caption{Lorenz system with time-dependent conserved quantity \\ $\psi(t,\bm x) =  \left(x^4-\frac{4}{3}x^2z-\frac{4}{9}y^2-\frac{8}{9}xy+\frac{1600}{3}x^2\right)e^{4t/3}$.}
\label{tab:lorenz}
\end{table}
\begin{figure}[!ht]
\centering
\vskip -4mm
\begin{subfigure}[b]{\textwidth}
  \centering
  \includegraphics[width=0.9\linewidth]{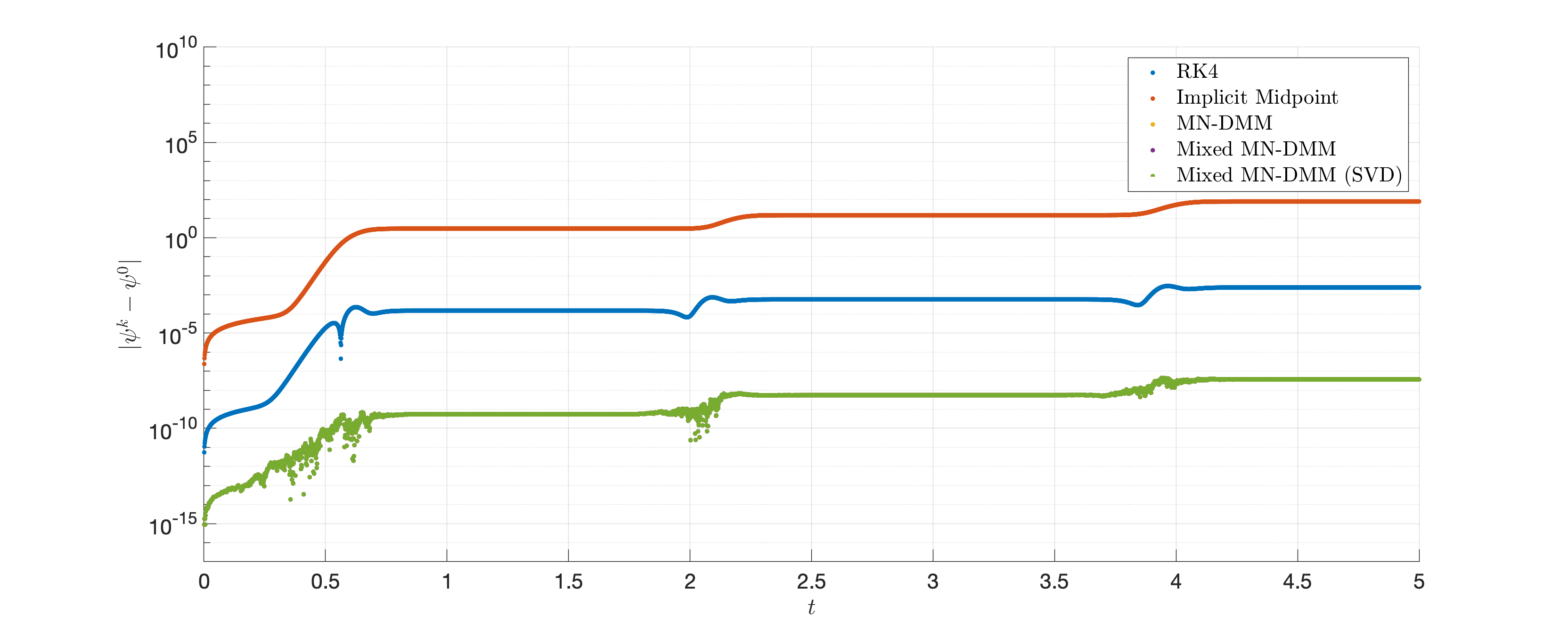}
\end{subfigure}
\hfil
\begin{subfigure}[b]{0.85\textwidth}
  \centering
  \includegraphics[width=\linewidth]{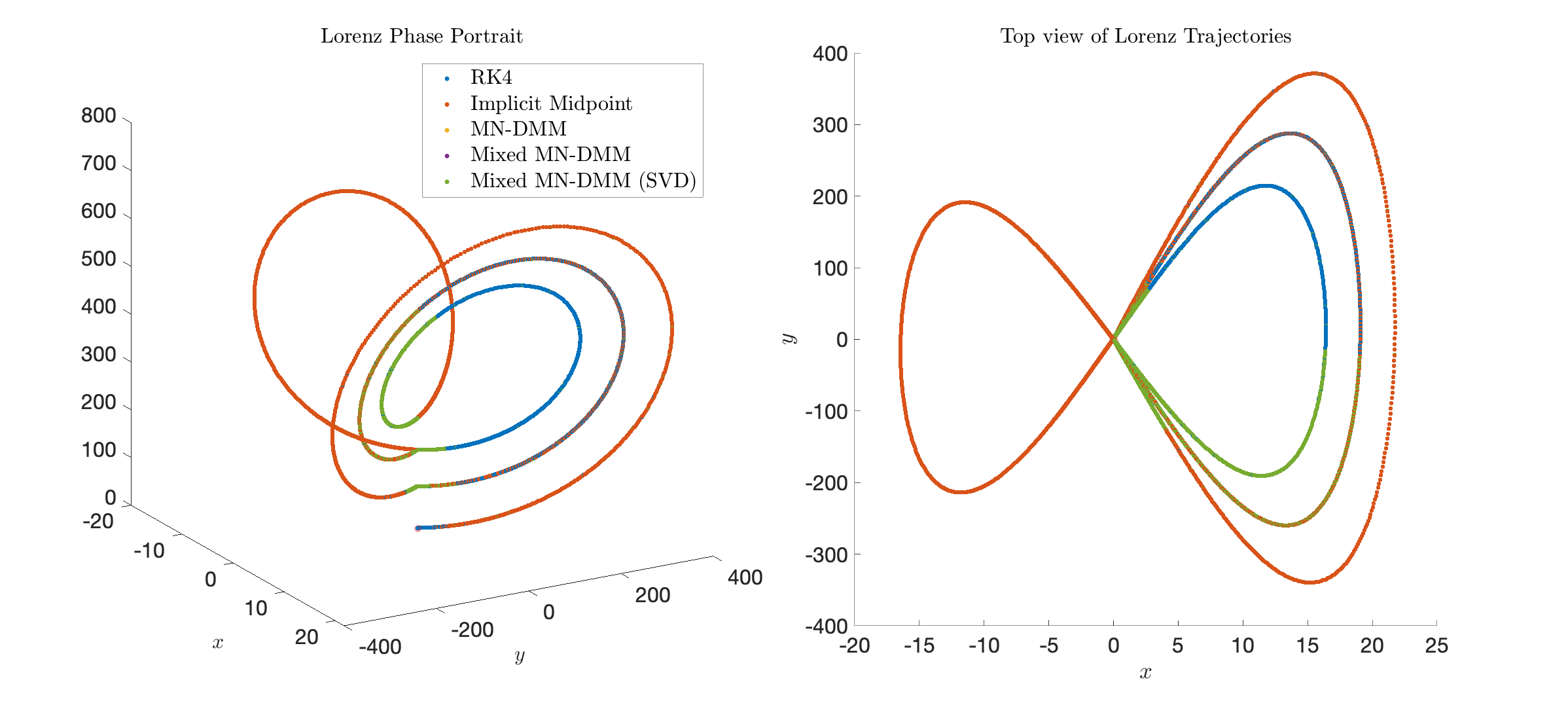}
\end{subfigure}
        \caption{Comparison of error in $\psi(t,\bm x)$ and trajectories for the Lorenz system.}
        \label{fig:lorenz}
        \vskip -4mm
\end{figure}
\Cref{tab:lorenz} indicates that machine precision accuracy for the time-dependent conserved quantity was not obtained using the MN-DMM approach. This is due to the stiffness of the problem as indicated by the high average number of FPIs for both the Implicit Midpoint method and the MN-DMM approach. Moreover, it can be observed in \Cref{fig:lorenz} that fast transient dynamics occurs when the solution loops back toward the origin on the $xy$--plane, corresponding to the three apparent ``jumps" in the error of $\psi$. Nevertheless, the Implicit Midpoint method has the largest error of $\sim 10^1$ in the conserved quantity, leading to an incorrect transient part of its trajectory located in the $x<0$ region, as depicted in  \Cref{fig:lorenz}. In contrast, the RK4 method and the MN-DMM approach have respective errors of $\sim 10^{-3}$ and $\sim 10^{-8}$ in the time-dependent conserved quantity $\psi$, with their trajectories remaining in the $x>0$ region.
\vskip -4mm
\subsection{$N$-point vortex problem on the unit sphere}
\label{sec: point vortex}
In \cite[Example 4.5]{manybody2022}, analytic DMM scheme was derived for the classical $N$-point vortex problem on the unit sphere, which is an idealized model of approximating the solution to the incompressible Euler's equation on the unit sphere, given by
\begin{align}
\bm F(\bm x, \dot{\bm x}) := 
\dot{\bm x}_i-\dfrac{1}{4\pi}\sum\limits_{j=1, j\neq i}^N \Gamma_j \dfrac{{\bm x}_j\times {\bm x}_i}{1-{\bm x}_i\cdot {\bm x_j}}
 =\boldsymbol 0, \label{eq:PV}
\end{align} where $\bm x=({\bm x}_1,\dots, {\bm x}_n)^{\top}$ with ${\bm x}_i\in \mathbb{S}^2$ being the position of the $i$-th point vortex on the unit sphere and $\Gamma_i$ being the vortex strength of the $i$-th vortex. The point vortex equations on the unit sphere~\eqref{eq:PV} possess four conserved quantities, given by the momentum vector $\bm P\in \mathbb{R}^3$ and the Hamiltonian $H$, which are
\begin{align}\label{eq:CQPV}
\begin{split}
\bm P(\bm x) &:= \sum\limits_{i=1}^N \Gamma_i {\bm x}_i,
\end{split}
\begin{split}
H(\bm x) &:= -\dfrac{1}{4\pi}\sum\limits_{1\leq i< j\leq N} \Gamma_i \Gamma_j \log  (1-{\bm x}_i\cdot {\bm x}_j).
\end{split}
\end{align}
An analytic DMM scheme was derived in \cite{manybody2022} with significant computation effort to verify the discrete multiplier conditions, in contrast to the MN-DMM approach. Using $N=100$ randomly generated vortices and the solver parameters $\tau = 0.1, T=200, \delta = 1\times10^{-15}, \epsilon = 1\times10^{-15}$ and $K=20$, we obtain the error in four conserved quantities given in \Cref{tab:PVsphere}.

\begin{table}[h!]
\centering
\vskip -0.2cm
\resizebox{\columnwidth}{!}{
\begin{tabular}{|p{3.8cm}|c|c|c|c|}
\hline
\centering Numerical Method & $\scriptstyle\norm{\bm{P}-\bm{P}^0}_\infty$ & $\scriptstyle\norm{H-H^0}_\infty$ & Mean FPIs & $\scriptstyle \norm{\kappa(\cdot)}_\infty$\\
\hline
\hline
\centering RK4 & $3.022\times 10^{-16}$ & $1.360\times 10^{-6}$ & -- & -- \\
\hline
\centering Implicit Midpoint & $3.193\times 10^{-16}$ & $1.240\times 10^{-7}$ & 20.000 & --\\
\hline
\centering MN-DMM & $2.705\times 10^{-16}$ & $1.025\times 10^{-15}$ & 4.670 & --\\
\hline
\centering Mixed MN-DMM & $3.243\times 10^{-16}$ & $1.022\times 10^{-15}$ & 4.652 & 11.58\\
\hline
\centering Mixed MN-DMM (SVD)& $3.243\times 10^{-16}$ & $1.022\times 10^{-15}$ & 4.652 & 3.403\\
\hline
\end{tabular}
}
\caption{Point vortices on the unit sphere with conserved quantities $\bm P(\bm x)$ and $H(\bm x)$.}
\label{tab:PVsphere}
\end{table}
\vskip -5mm
\begin{figure}[!ht]
\centering
\begin{subfigure}[b]{\textwidth}
  \centering
  \includegraphics[width=0.85\linewidth]{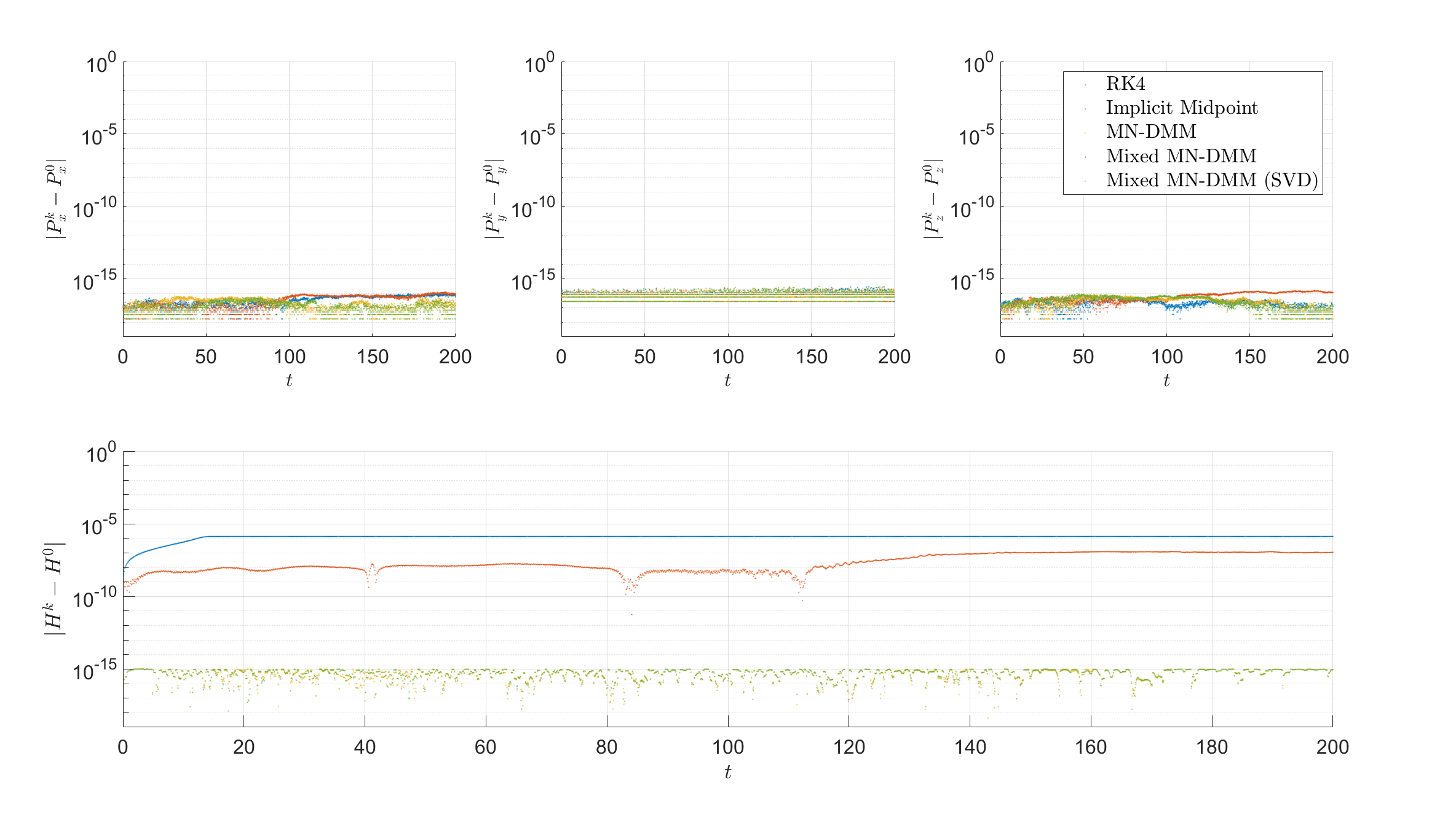}
\end{subfigure}
\hfil
\begin{subfigure}[b]{\textwidth}
  \centering
  \includegraphics[width=0.85\linewidth]{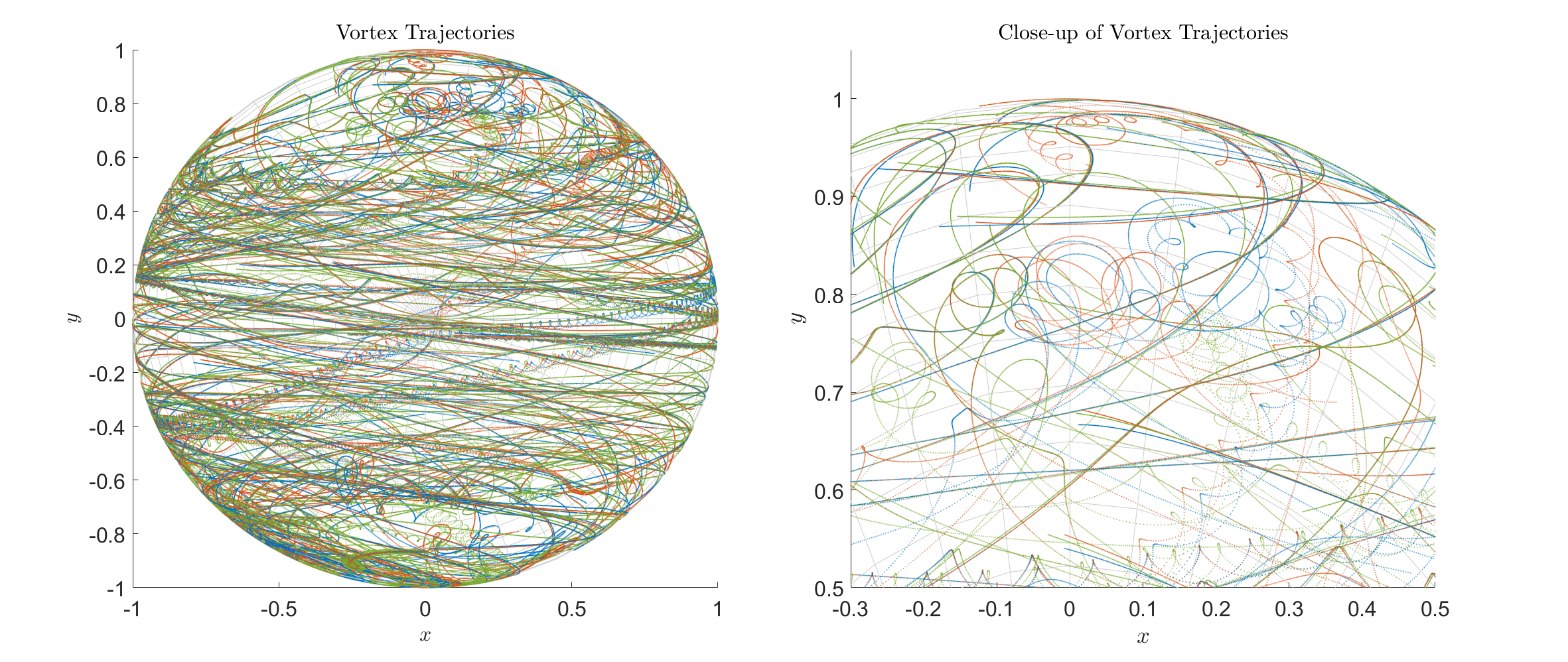}
\end{subfigure}
\caption{Comparison of error in conserved quantities and trajectories for the point vortex problem.}
\label{fig:PVSphere}
\end{figure}

\Cref{tab:PVsphere} indicates that this problem is relatively well-conditioned, with the MN-DMM approach converging faster than the Implicit Midpoint method. Moreover, as \Cref{fig:PVSphere} illustrates, all methods can preserves the momentum vector $\bm P$ up to machine precision. This is expected since the conserved quantities $\bm P$ are linear invariants, see \cite{hair06Ay}. On the other hand, only the MN-DMM approach is able to preserve the Hamiltonian $H$. While both the RK4 and Implicit Midpoint methods have error in Hamiltonian of $10^{-7}\sim 10^{-6}$, the observed trajectories are in stark contrast to the MN-DMM ones on a relatively short integration time of $T=200$. This is consistent with the observations made in \cite[Example 4.5]{manybody2022} using the analytic DMM scheme for this problem. Thus, for larger number of vortices and longer term integration, large deviation in trajectories are likely to occur when the error in Hamiltonian is not close to machine precision.

\subsection{Geodesic curve on Schwarzschild Geometry}
\label{sec: geodesic}

For the final example, we apply the MN-DMM approach to solve for geodesic curves on an $n$-dimensional pseudo-Riemannian manifold.  Specifically, we study geodesics for the Schwarzschild metric via the evolution of test particles in a spherically symmetric gravitational field. We refer to \cite{carroll_2019,Godinho_2014,o1983semi} for details on the following system. Recall that geodesic curves locally satisfy the first order system of ordinary differential equations
\begin{equation}
\bm F(\bm x,\bm y,\dot{\bm x},\dot{\bm y}) := \begin{pmatrix}
\left[\dot{x}^l - y^l\right]_{1\leq l\leq n}\\
\left[\dot{y}^l +  \sum\limits_{j,k=1}^n\Gamma^l_{j,k}(\bm x) y^jy^k\right]_{1\leq l\leq n}
\end{pmatrix} = \bm 0, \label{eq:gdSys}
\end{equation}
where $\Gamma^i_{j,k}$ are Christoffel symbols of the second kind, cf. \cite[Chap.3]{carroll_2019} or \cite[Chap. 3]{o1983semi}.  A well-known conserved quantity is the speed  \cite[Chap. 5.4]{carroll_2019} given by
\begin{align}
S(\bm x,& \bm y) =  \sum_{i,j=1}^n g_{ij}(\bm x)y^iy^j \label{eq:gdSpeed}
\end{align}
where $g_{ij}(\bm x)$ denotes the Riemannian metric tensor 
\cite[Sec. 3.8 and Appendix B]{carroll_2019}. As a concrete example, we consider the Schwarzschild metric, which is a radially symmetric solution to Einstein's equation in vacuum. In Schwarzschild coordinates $\bm x = (t,r,\theta,\phi)$ and $\bm y = (t',r',\theta',\phi')$, it is represented by the diagonal matrix
\begin{equation}
g(\bm x) = \text{diag}\left(1-\dfrac{r_s}{r},  -\left(1-\dfrac{r_s}{r}\right)^{-1}, -r^2, -r^2\sin^2\theta \right), \label{eq:SMetric}
\end{equation} 
where $r_s = \frac{2GM}{c^2}$ is the Schwarzschild radius.
In this setting, there are five conserved quantities. Indeed, using the spherical symmetries of this metric, it can be shown 
that the energy $E$ and angular momentum $\bm L$ are conserved:
\begin{align*}
E(\bm x, \bm y) &= \left(1-\frac{r_s}{r}\right)t',\\
\bm L(\bm x, \bm y) &= \begin{pmatrix} r^2\sin(\theta)\phi' \\ 
                         r^2(\cos(\phi)\theta'-\cos(\theta)\sin(\phi)\phi') \\
                         r^2(\sin(\phi)\theta'-\cos(\theta)\cos(\phi)\phi')
         \end{pmatrix}.
\end{align*}
Moreover, the expression in \eqref{eq:gdSpeed} reduces to
\begin{align*}
S(\bm x, \bm y) &= \left(1-\frac{r_s}{r}\right)t'^2-\left(1-\dfrac{r_s}{r}\right)^{-1}r'^2-r^2\theta'^2-r^2\sin^2 \theta \phi'^2.
\end{align*}

Due to the complexity of these expressions, significant computation effort would be required to derive an analytic DMM scheme for \eqref{eq:gdSys} to preserve these five conserved quantities. In contrast, the MN-DMM approach requires relatively minimal effort to implement. We compare their numerical results using the solver parameters $\tau = 1/3, T=200, \delta = 1\times10^{-15}, \epsilon = 1\times10^{-15}, K=20$. We have set $G, M, c$ to unity for simplicity, and used the initial conditions
\begin{align*}
    \bm x^0 &= \begin{pmatrix} 0, 37.338379348829989, \pi/2, 3.006861595479139 \end{pmatrix}^\top,\\
    \bm y^0 &= \begin{pmatrix} 1, -0.990937492340824, 0, 0.003597472991852\end{pmatrix}^\top.
\end{align*}

As \Cref{tab:Schwarzschild} shows, the MN-DMM schemes are the only ones able to preserve all five conserved quantities up to machine precision. In contrast, the RK4 method was unstable at $\tau=1/3$ and the Implicit Midpoint method had errors in the conserved quantities between $10^{-4}\sim10^{-3}$. Due to the intricate short time dynamics of passing near the Schwarzschild radius, both the Implicit Midpoint method and MN-DMM required a similar number of fixed point iterations of $18\sim19$, with the maximum of 20. Also, the condition number for the Mixed MN-DMM using SVD approach is nearly seven orders of magnitude smaller than the Mixed MN-DMM.

\begin{table}[h!]
\centering 
\vskip -0.2cm
\resizebox{\columnwidth}{!}{
\begin{tabular}{|c|c|c|c|c|c|}
\hline
Numerical Method & $\scriptstyle\norm{S-S^0}_\infty$ & $\scriptstyle\norm{E-E^0}_\infty$ & $\scriptstyle\norm{\bm L-\bm L^0}_\infty$ & Mean FPIs & $\scriptstyle \norm{\kappa(\cdot)}_\infty$\\
\hline
\hline
RK4 & NaN & NaN & NaN & -- & --  \\
\hline
Implicit Midpoint & $2.590\times 10^{-4}$ & $3.624\times 10^{-4}$ & $4.590\times 10^{-3}$ & 18.863 & --\\
\hline
MN-DMM & $7.896\times 10^{-15}$ & $1.221\times 10^{-15}$ & $1.579\times 10^{-14}$ & 19.142 & -- \\
\hline
Mixed MN-DMM & $4.816\times 10^{-15}$ & $9.992\times 10^{-16}$ & $8.464\times 10^{-15}$ & 19.273 & $1.023\times 10^{13}$\\
\hline
{Mixed MN-DMM (SVD)} & $9.867\times 10^{-15}$ & $1.332\times 10^{-15}$ & $1.921\times 10^{-14}$ & 19.347 & $5.062\times 10^{5}$\\
\hline
\end{tabular}
}
\caption{Geodesic curves on Schwarzschild Geometry with conserved quantities $S, E, \bm L$ ($\tau=1/3$)}
\label{tab:Schwarzschild}
\end{table}
\begin{figure}[!ht]
\centering
\vskip -4mm
\begin{subfigure}[b]{\textwidth}
  \centering
  \includegraphics[width=\linewidth]{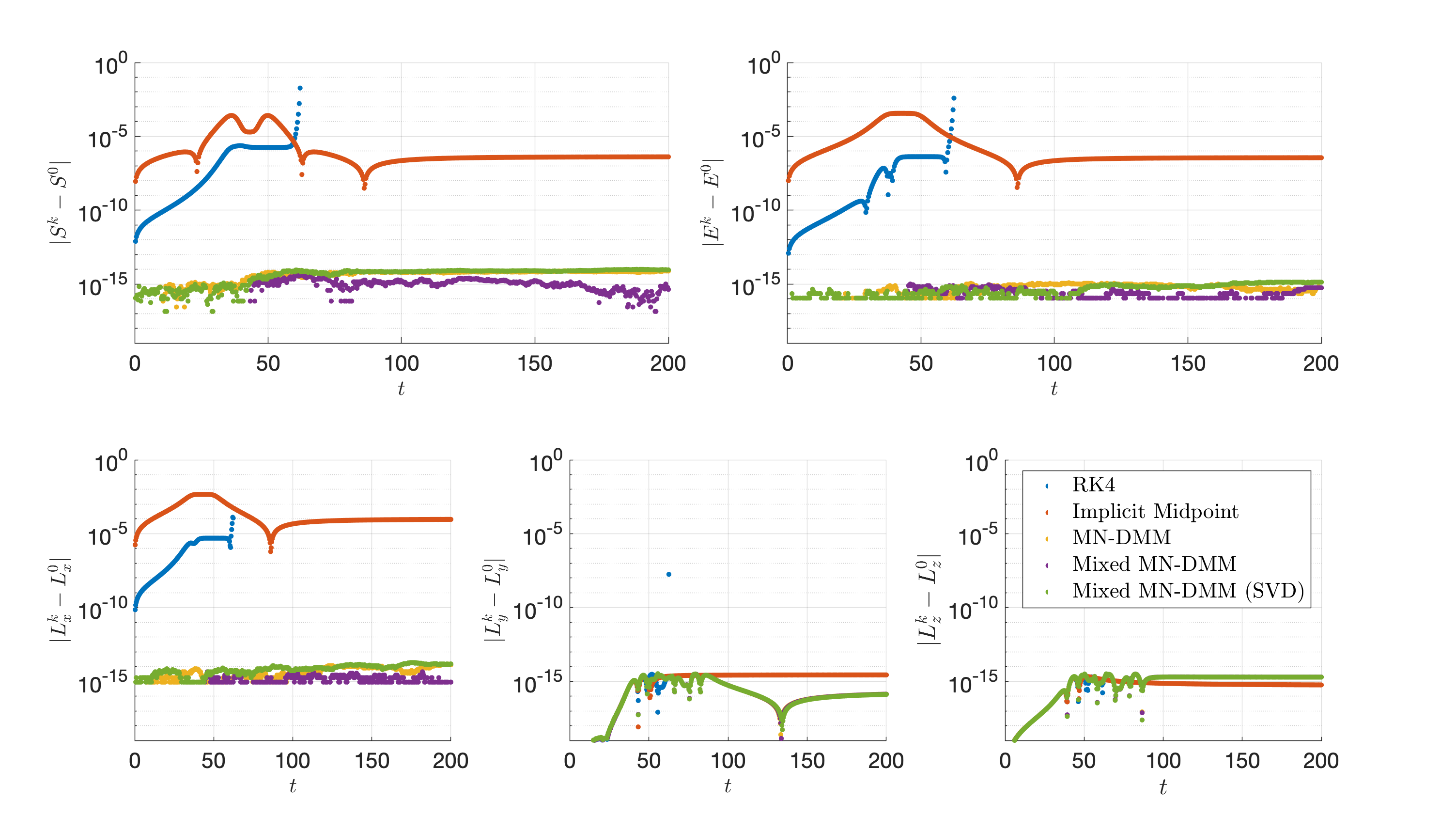}
\end{subfigure}
\hfil
\begin{subfigure}[b]{\textwidth}
  \centering
  \includegraphics[width=\linewidth]{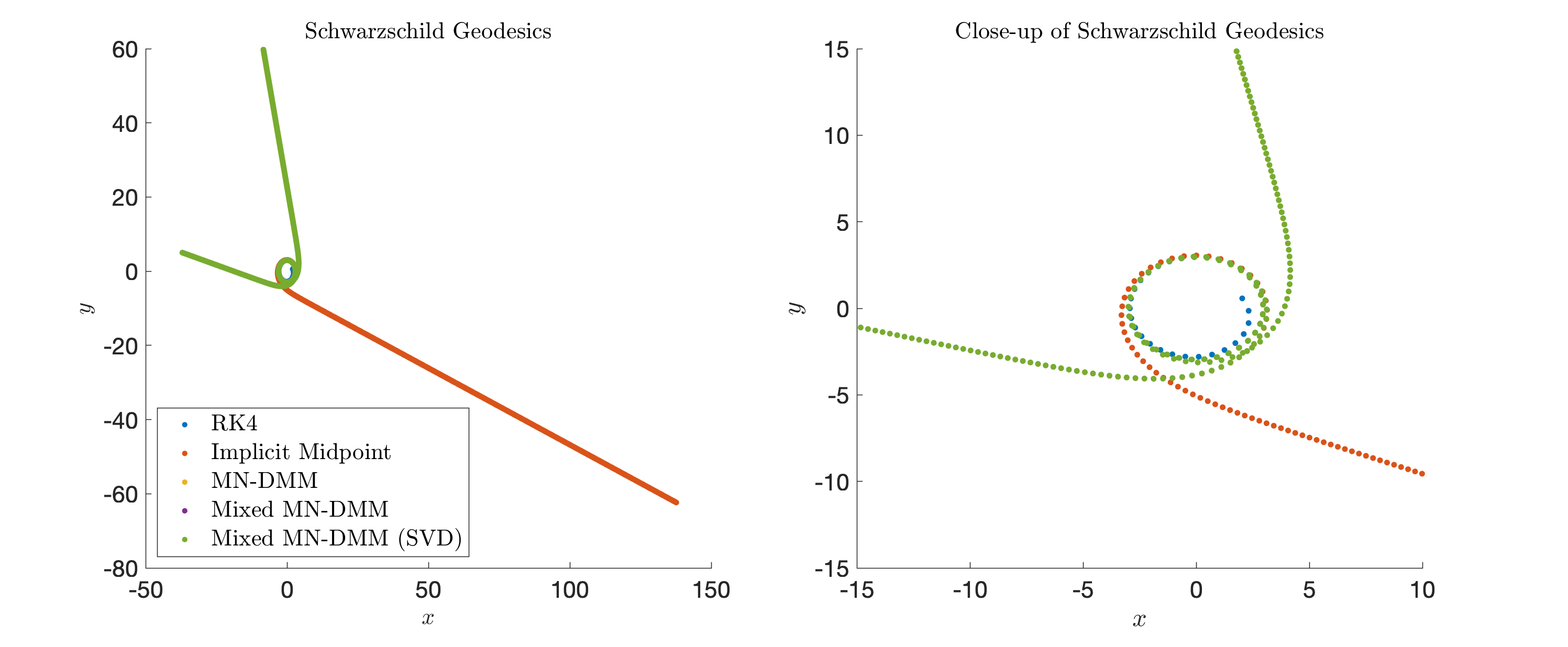}
\end{subfigure}
        \caption{Comparison of error in conserved quantities and geodesics using $\tau=1/3$.}
        \label{fig:schwarzschild-0}
\end{figure}

\begin{figure}[!ht]
\centering
\begin{subfigure}[b]{\textwidth}
  \centering
  \includegraphics[width=0.81\linewidth]{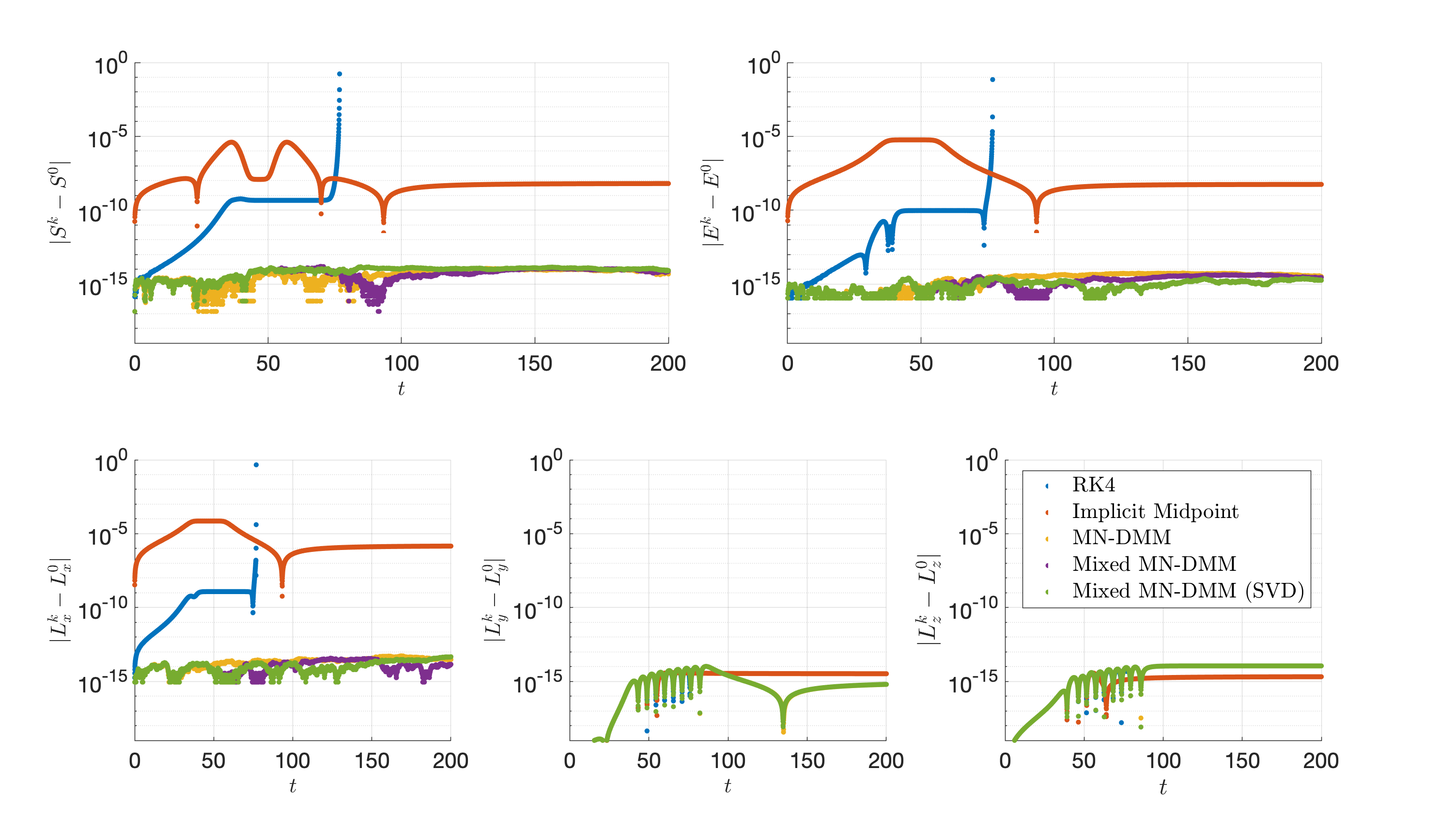}
  \vskip -4mm
  \caption{$\tau=1/3\times 2^{-3}$}
\end{subfigure}
\hfil
\begin{subfigure}[b]{\textwidth}
  \centering
  \includegraphics[width=0.81\linewidth]{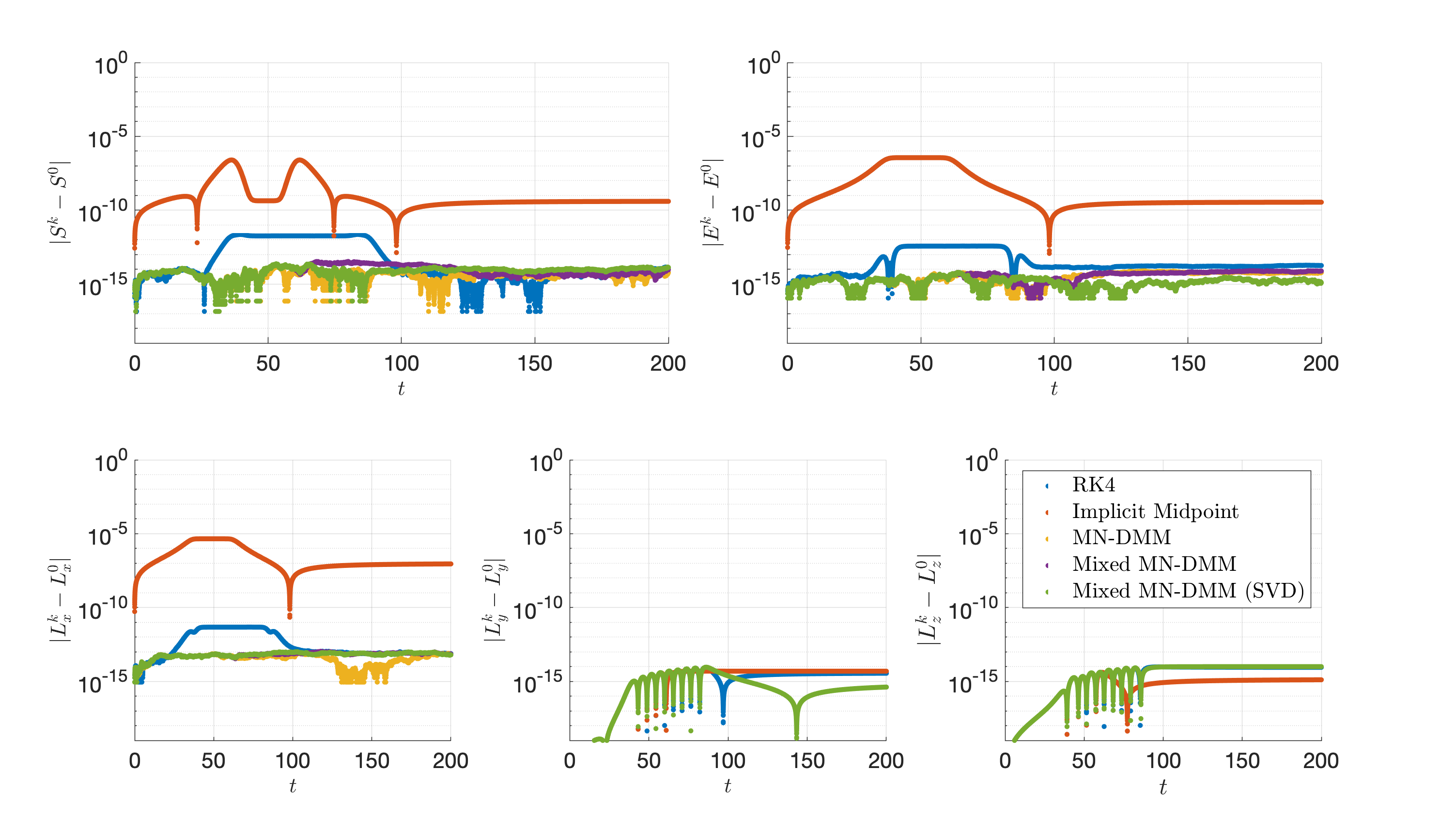}
  \vskip -4mm
  \caption{$\tau=1/3\times 2^{-5}$}
\end{subfigure}
\hfil
\begin{subfigure}[b]{\textwidth}
  \centering
  \includegraphics[width=0.81\linewidth]{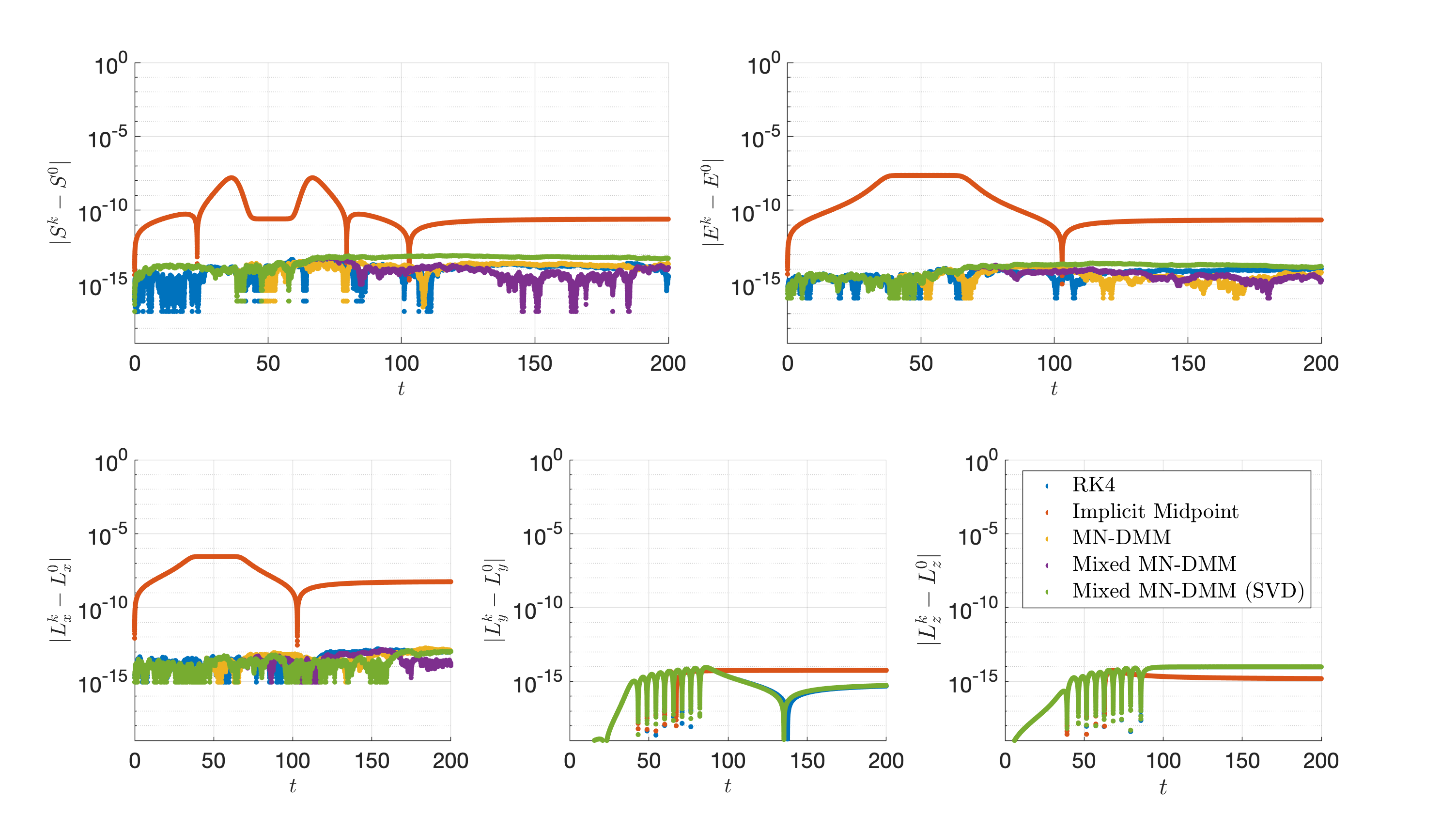}
  \vskip -4mm
  \caption{$\tau=1/3\times 2^{-7}$}
\end{subfigure}
        \caption{Comparison of error in conserved quantities for various $\tau$.}
        \label{fig:schwarzschildCQ-1}
\end{figure}

\begin{figure}[!ht]
\centering
\begin{subfigure}[b]{\textwidth}
  \centering
  \includegraphics[width=0.81\linewidth]{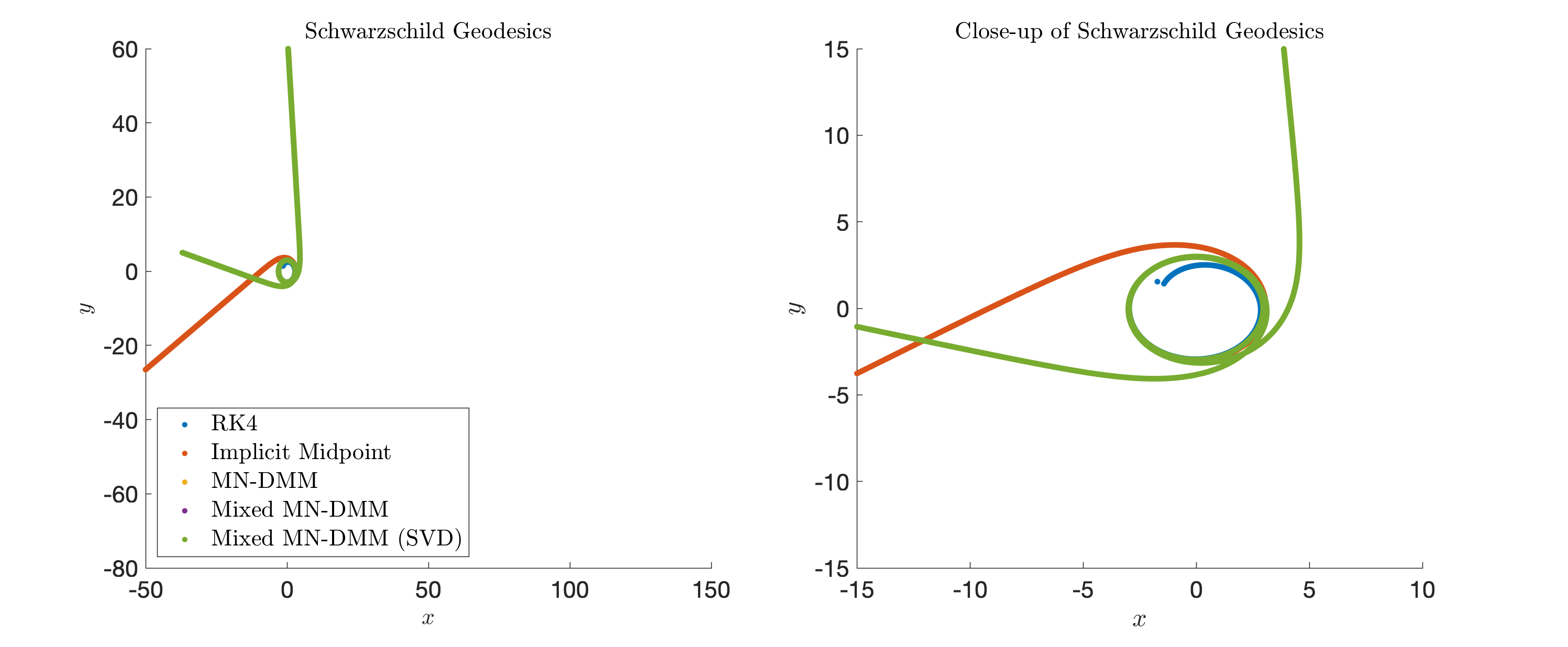}
  \vskip -2mm
  \caption{$\tau=1/3\times 2^{-3}$}
\end{subfigure}
\hfil
\begin{subfigure}[b]{\textwidth}
  \centering
  \includegraphics[width=0.81\linewidth]{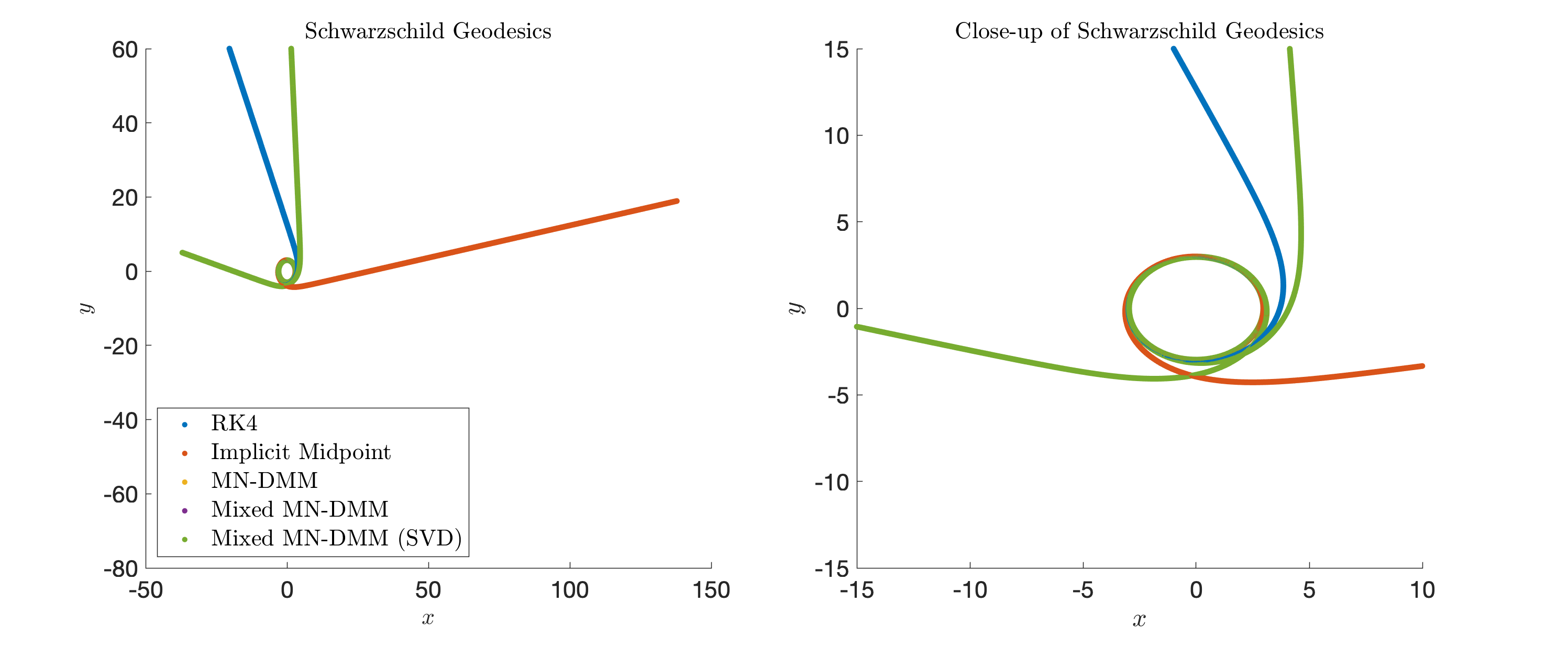}
  \vskip -2mm
  \caption{$\tau=1/3\times 2^{-5}$}
\end{subfigure}
\hfil
\begin{subfigure}[b]{\textwidth}
  \centering
  \includegraphics[width=0.81\linewidth]{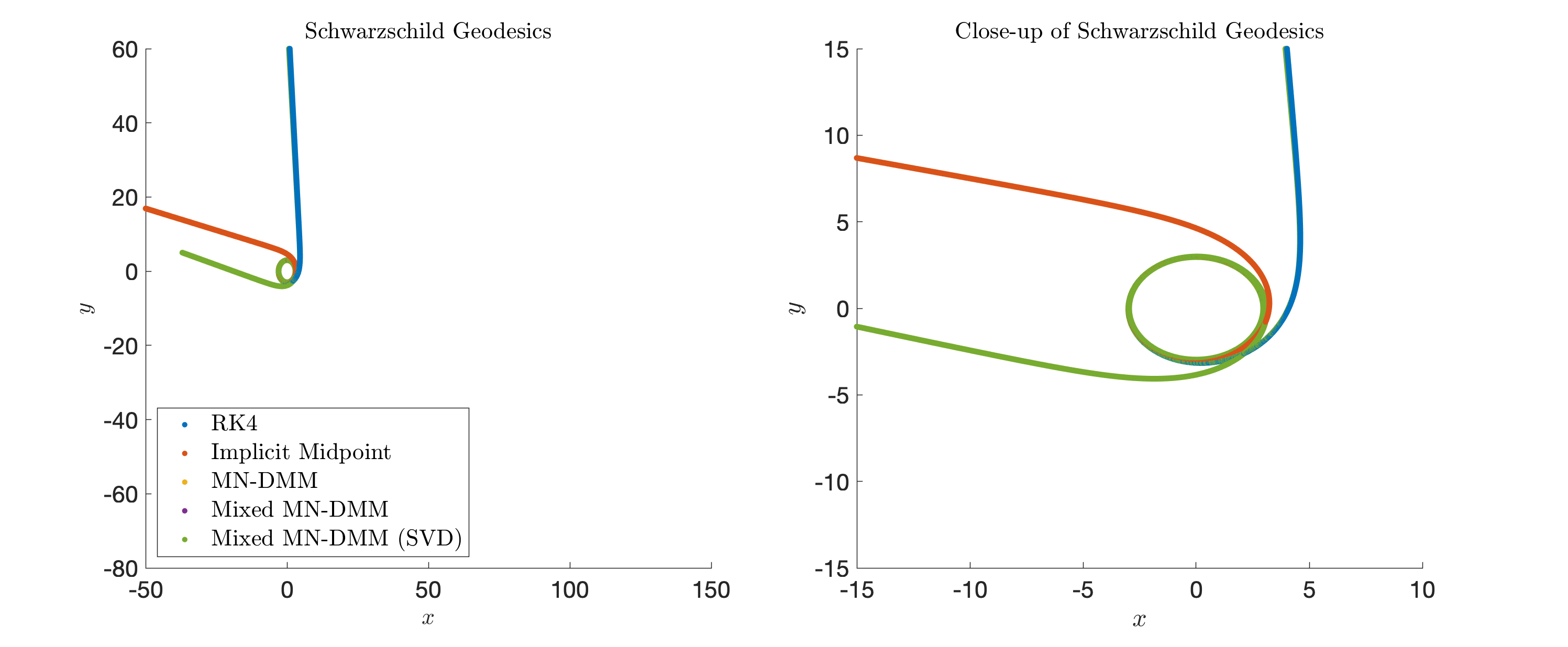}
  \vskip -2mm
  \caption{$\tau=1/3\times 2^{-7}$}
\end{subfigure}
        \caption{Comparison of geodesics curves for various $\tau$.}
        \label{fig:schwarzschild-2}
\end{figure}
\vskip -3mm
From \Cref{fig:schwarzschild-0}, we see that the Implicit Midpoint method and the different MN-DMM methods show out-going trajectories even at a relatively large time step of $\tau=1/3$. The unstable RK4 results indicate that preserving the conserved quantities near the Schwarzschild radius is critical at predicting the correct long-term trajectories. Moreover, \Cref{fig:schwarzschild-0} illustrates the geodesic curves of the Implicit Midpoint method predicts entirely wrong long term trajectory, while RK4 predicts a nonphysical outcome of ending inside the black hole.

To further study the differences between these methods at predicting the correct geodesic curves, we decrease their time step size and compare their long term trajectories and error in conserved quantities.

In \Cref{fig:schwarzschildCQ-1} and \Cref{fig:schwarzschild-2} with $\tau=1/3\times 2^{-3}$, we see that both the Implicit Midpoint method and RK4 method does not preserve conserved quantities up to machine precision, with RK4 still predicting nonphysical results. At $\tau=1/3\times 2^{-5}$, both the Implicit Midpoint method and RK4 method now predict outgoing trajectories, albeit incorrect long term trajectories. Finally at $\tau=1/3\times 2^{-7}$, the Implicit Midpoint method still predicts incorrect long term trajectory. Meanwhile, the RK4 method is now able to mimic machine precision accuracy for the conserved quantities due to its higher order accuracy and much small $\tau$. Thus, the long term trajectories of the RK4 method now agrees with the MN-DMM ones obtained using much larger $\tau$. This final example highlights that conservative integration techniques, such as the MN-DMM, can be useful in intricate short-term dynamics, where machine precision level accuracy in conserved quantities can lead to more accurate long term predictions.



\section{Acknowledgement}

The authors thank Jean-Christophe Nave and Gantumur Tsogtgerel for their valuable discussions in the initial stage of this work at McGill University. Andy T.S. Wan acknowledges funding support from the Discovery Grant program of the Natural Sciences and Engineering Research Council of Canada, (RGPIN-2019-07286) and Early Career Researcher Supplement (DGECR-2019-00467).

\bibliographystyle{unsrt}
\bibliography{Bibliography.bib}

\begin{thebibliography}{10}

\bibitem{wan2017conservative}
Andy T.~S. Wan, Alexander Bihlo, and Jean-Christophe Nave.
\newblock Conservative methods for dynamical systems.
\newblock {\em SIAM J. Numer. Anal.}, 55(5):2255--2285, 2017.

\bibitem{hair06Ay}
E.~Hairer, C.~Lubich, and G.~Wanner.
\newblock {\em {Geometric numerical integration: structure-preserving
  algorithms for ordinary differential equations}}.
\newblock Springer, Berlin, 2006.

\bibitem{MarWes01}
J.~E. Marsden and M.~West.
\newblock {Discrete Mechanics and Variational Integrators}.
\newblock {\em Acta Numerica}, 10:357--514, 2001.

\bibitem{IMNZ00}
Arieh Iserles, Hans~Z. Munthe-Kaas, Syvert~P. Nørsett, and Antonella Zanna.
\newblock Lie-group methods.
\newblock {\em Acta Numerica}, 9:215–365, 2000.

\bibitem{CalHai95}
M.P. Calvo and E.~Hairer.
\newblock {Accurate long-term integration of dynamical systems}.
\newblock {\em Appl. Numer. Math.}, 18:95--105, 1995.

\bibitem{wan2018stability}
Andy T.~S. Wan and Jean-Christophe Nave.
\newblock On the arbitrarily long-term stability of conservative methods.
\newblock {\em SIAM J. Numer. Anal.}, 56(5):2751--2775, 2018.

\bibitem{KangZaijiu95}
Feng Kang and Shang Zai-jiu.
\newblock Volume-preserving algorithms for source-free dynamical systems.
\newblock {\em Numerische Mathematik}, 71(4):451--463, 1995.

\bibitem{McL99}
G.~R. W.~Quispel R.~I.~McLachlan and N.~Robidoux.
\newblock {Geometric integration using discrete gradients}.
\newblock {\em Phil. Trans. R. Soc. Lond.}, 357:1021--1045, 1999.

\bibitem{manybody2022}
Andy T.~S. Wan, Alexander Bihlo, and Jean-Christophe Nave.
\newblock {Conservative integrators for many--body problems}.
\newblock {\em Journal of Computational Physics}, 466:111417, 2022.

\bibitem{vortex2022}
Cem Gormezano, Jean-Christophe Nave, and Andy T.~S. Wan.
\newblock {Conservative Integrators for Vortex Blob Methods on the Plane}.
\newblock {\em Journal of Computational Physics}, 469:111357, 2022.

\bibitem{HWW21}
Anil Hirani, Andy T.~S. Wan, and Nikolas Wojtalewicz.
\newblock {Conservtive Integrators for Piecewise Smooth Dynamics with
  Transversal Dynamics}.
\newblock arxiv:2106.07484, 2021.

\bibitem{chmc22}
Geoffrey McGregor and Andy T.~S. Wan.
\newblock {Conservtive Hamiltonian Monte Carlo}.
\newblock arxiv:2206.06901, 2022.

\bibitem{Bjorck96}
{\AA}ke Bj\"{o}rck.
\newblock {\em Numerical methods for least squares problems}.
\newblock Society for Industrial and Applied Mathematics (SIAM), Philadelphia,
  PA, 1996.

\bibitem{quarteroni2010numerical}
Alfio Quarteroni, Riccardo Sacco, and Fausto Saleri.
\newblock {\em Numerical Mathematics}, volume~37 of {\em Texts in Applied
  Mathematics}.
\newblock Springer-Verlag, Berlin, second edition, 2007.

\bibitem{trefethenBau97}
Lloyd~N. Trefethen and David Bau.
\newblock {\em Numerical Linear Algebra}.
\newblock SIAM, 1997.

\bibitem{schi03a}
R.~Schimming.
\newblock {Conservation laws for Lotka--Volterra models}.
\newblock {\em Math. Methods Appl. Sci.}, 26(17):1517--1528, 2003.

\bibitem{AblowitzSegur81}
Mark~J. Ablowitz and Harvey Segur.
\newblock {\em Solitons and the inverse scattering transform}.
\newblock SIAM: Studies in applied and numerical mathematics, 1981.

\bibitem{kus83}
M.~Kus.
\newblock {Integrals of motion for the Lorenz system}.
\newblock {\em J. Phys. A: Math. Gen.}, 16(18):L689--L691, 1983.

\bibitem{carroll_2019}
Sean~M. Carroll.
\newblock {\em Spacetime and Geometry: An Introduction to General Relativity}.
\newblock Cambridge University Press, 2019.

\bibitem{Godinho_2014}
Leonor Godinho and Jos\'{e} Nat\'{a}rio.
\newblock {\em An introduction to {R}iemannian geometry}.
\newblock Universitext. Springer, Cham, 2014.
\newblock With applications to mechanics and relativity.

\bibitem{o1983semi}
Barrett O'Neill.
\newblock {\em Semi-Riemannian geometry with applications to relativity}.
\newblock Academic press, 1983.

\end{thebibliography}

\section{Appendix}

\end{document}